\newcommand{\Subsection}[1]{\subsection{ #1} ${}^{}$}
\newtheorem{theorem}{Theorem}[section]
\newtheorem{lemma}[theorem]{Lemma}
\newtheorem{proposition}[theorem]{Proposition}
\newcounter{hypo}
\newcounter{hypoa}
\newcounter{hypoaa}
\newcounter{hypobb}
\def\N{{\mathbb N}} 
\def\R{{\mathbb R}}
\def\S{{\mathbb S}}
\def\CE{\mathcal {E}}
\def\CH{\mathcal {H}}
\def\CO{\mathcal {O}}
\def\CV{\mathcal {V}}
\def\one{\mathds{1}}
\def\re{\mathop{\rm Re}\nolimits}
 \def\im{\mathop{\rm Im}\nolimits}
\def\Op{\mathop{\rm Op}\nolimits}
\newcommand{\Id}{\operatorname{Id}}
\newcommand{\supp}{\operatorname{supp}}
\def\dist{\mathop{\rm dist}\nolimits}
\def\Hess{\mathop{\rm Hess}\nolimits}
\def\<{\langle}
\def\>{\rangle}
\def\res{\mathop{\rm Res}\nolimits}
\title{Counter-examples to the fractal Weyl law for semiclassical resonances}
\author[J.-F. Bony]{Jean-Fran\c{c}ois Bony}
\address{Jean-Fran\c{c}ois Bony, IMB, CNRS (UMR 5251), Universit\'e de Bordeaux, 33405 Talence, France}
\email{bony@math.u-bordeaux.fr}
\author[S. Fujii\'e]{Setsuro Fujii\'e}
\address{Setsuro Fujii\'e, Department of Mathematical Sciences, Ritsumeikan University, 1-1-1 Noji-Higashi, Kusatsu, 525-8577 Japan}
\email{fujiie@fc.ritsumei.ac.jp}
\author[T. Ramond]{Thierry Ramond}
\address{Thierry Ramond, Universit\'e Paris-Saclay, CNRS, Laboratoire de math\'ematiques d'Orsay, 91405, Orsay, France}
\email{thierry.ramond@universite-paris-saclay.fr}
\author[M. Zerzeri]{Maher Zerzeri}
\address{Maher Zerzeri, Universit\'e Sorbonne Paris-Nord, LAGA, CNRS (UMR 7539), 93430 Villetaneuse, France}
\email{zerzeri@math.univ-paris13.fr}
\keywords{Semiclassical resonances, fractal Weyl law, microlocal analysis, Schr\"{o}dinger operators}
\subjclass[2020]{35B34, 81Q20, 81U24, 37C29, 35J10, 35P20}
\thanks{\textbf{Acknowledgments:} The first author was partially supported by ANR grant ANR-24-CE40-2939-01. The second author was partially supported by the JSPS KAKENHI Grant 24K06790 and thanks the Universities of Bordeaux and Paris 13 for their hospitality.}
\begin{document}

\begin{abstract}
Under general assumptions, the numbers of semiclassical resonances is known to be bounded from above by a negative power of $h$ which is given by the fractal dimension of the trapped set. In this paper we provide examples of operators with much less resonances, showing that these upper bounds are not always sharp.
\end{abstract}

\maketitle

\section{Introduction} \label{s1}

We study the distribution of resonances for semiclassical operators. More precisely, we are interested in the asymptotic behavior of their counting function. In hyperbolic settings, it is nowadays well established that upper bounds for the counting function of resonances are given by Weyl type expressions whose order is given by the fractal dimension of the trapped set (see e.g. \cite{Sj90_01}). Analogous results have also been obtained for resonances defined in geometric or arithmetic settings \cite{Na17_01}. On the other hand, no general lower bounds of the same order has been proven yet, that is to say no general fractal Weyl law for resonances is known. Nevertheless, in all the cases where computations of resonances are feasible either theoretically or numerically, the Weyl type upper bounds are sharp (see e.g. \cite{Bo14_01,StZw04_01}). Our aim here is to provide examples where the number of resonances is much less than predicted by the fractal Weyl law.

We consider a semiclassical Schr\"{o}dinger operator with a metric and a magnetic potential. More precisely, we study the semiclassical Weyl quantization $P = \Op ( p )$ on $\R^{n}$, $n \geq 1$, i.e.
\begin{equation} \label{a49}
P u ( x ) = ( 2\pi h )^{- n} \iint e^{i ( x - y )\cdot \xi / h } p \Big( \frac{x + y}{2} , \xi \Big) u ( y ) \, d y\, d \xi , 
\end{equation}
of a symbol $p ( x , \xi ) \in C^{\infty} ( \R^{2 n} ; \R )$ of the form
\begin{equation} \label{a15}
p ( x , \xi ) = a ( x ) \xi \cdot \xi + b ( x ) \cdot \xi + c( x ) ,
\end{equation}
where the symmetric $n \times n$ matrix $a ( x )$, the $n$ vector $b ( x ) \in \R^{n}$ and the scalar $c ( x ) \in \R$ are real and smooth. Moreover, we assume that $a$ is uniformly elliptic (i.e. there exists $\delta > 0$ such that $a( x ) \geq \delta$ for all $x \in \R^{n}$) and that $p ( x , \xi )$ is a compactly supported perturbation of $\xi^{2}$, that is $a ( x ) - \Id$, $b ( x )$ and $c ( x )$ have compact support. Thus, $P$ is a semiclassical elliptic differential operator of order $2$ which coincides with $- h^{2} \Delta$ outside a compact set. Its resonances near the real axis are well defined through the analytic distortion method or as the poles of the meromorphic extension of a cutoff resolvent (see the book of Dyatlov and Zworski \cite{DyZw19_01}). In the sequel, $\res ( P )$ denotes the set of resonances of $P$ repeated according to their algebraic multiplicity. We will need to consider more general symbols than in \eqref{a15} in Section \ref{s9}.

The Hamiltonian vector field associated to the symbol $p ( x , \xi )$ is
\begin{equation*}
H_{p} = \partial_{\xi} p \cdot \partial_{x} - \partial_{x} p \cdot \partial_{\xi} ,
\end{equation*}
on the cotangent space $T^{*} \R^{n} = \R^{2 n}$. The integral curves $t \longmapsto \exp ( t H_{p} )( x , \xi )$ of $H_{p}$ are called Hamiltonian trajectories, and $p$ is constant along such curves. The trapped set at energy $E_{0}$ is defined by
\begin{equation} \label{a92}
K ( E_{0} ) = \big\{ ( x , \xi ) \in p^{- 1} ( E_{0} ) ; \ t \longmapsto \exp ( t H_{p} ) ( x , \xi ) \text{ is bounded} \big\} .
\end{equation}
For $E_{0} > 0$, $K ( E_{0} )$ is a compact set stable by the Hamiltonian flow. The Bohr's correspondence principle predicts a strong link between the structure of the trapped set and the distribution of resonances in the semiclassical limit $h \to 0$. In this paper, we will compare the fractal dimension of the trapped set and the counting function of resonances.

Under the global analyticity of $P$ and the assumption of the existence of an escape function which vanishes at order $2$ at the trapped set, Sj\"{o}strand \cite{Sj90_01} has proved the following Weyl fractal upper bound
\begin{equation} \label{a46}
\# \big(\res ( P ) \cap [ E_{0} - \delta , E_{0} + \delta ] + i [ - \varepsilon, 0 ]\big) \leq C \varepsilon^{- d / 2} ( \varepsilon / h )^{n} ,
\end{equation}
for $C_{0} h \leq \varepsilon \leq 1 / C_{0}$ and $h$ small enough. Here $d\in [ 0 , n ]$ denotes the upper packing dimension of $K ( [ E_{0} - 2 \delta , E_{0} + 2 \delta ] )$, modulo an arbitrary small positive constant if this set is not of ``pure dimension''. We recall that the Hausdorff, Minkowski and packing dimensions are different but closely related fractal dimensions (see Mattali \cite{Ma95_01} and \eqref{a47}). Applying \eqref{a46} to the case $\varepsilon = A h$, which is adapted to the non-globally analytic setting, we get
\begin{equation} \label{a48}
\# \big(\res ( P ) \cap [ E_{0} - \delta , E_{0} + \delta ] + i [ - A h , 0 ]\big) \leq C h^{- d / 2} .
\end{equation}
The corresponding lower bound (and then the Weyl law) is neither proven nor conjectured in this broad setting. Nevertheless, to our knowledge, the upper bound \eqref{a48} is sharp and gives the correct behavior of the counting function of resonances in all the geometric settings where the semiclassical asymptotic of resonances is known (well in the island, critical points, hyperbolic closed trajectories, trapped set with symmetry, \ldots).

After this seminal work, other fractal upper bounds have been obtained in different settings and in smaller windows in real part. For semiclassical  operators as  in \eqref{a49}--\eqref{a15} (and even for pseudodifferential operators),  Sj\"{o}strand and Zworski \cite{SjZw07_02} have proven that, under a hyperbolicity assumption at the trapped set $K ( E_{0} )$,
\begin{equation} \label{a51}
\#\big(\res ( P ) \cap B ( E_{0} , A h )\big) \leq C h^{- \frac{m - 1}{2}} .
\end{equation}
for all $A > 0$. Here, $m$ denotes the upper Minkowski dimension of $K ( E_{0} )$, modulo an arbitrary small positive constant if this set is not of ``pure dimension''. Note that we rediscover formally \eqref{a48} by summing \eqref{a51} over $h^{- 1}$ balls of size $h$ assuming $d = m + 1$. In the case of several convex obstacles, Nonnenmacher, Sj\"{o}strand and Zworski \cite{NoSjZw14_01} have obtained a result similar to \eqref{a51} for the high frequency behavior of the counting function of scattering poles. For convex co-compact hyperbolic quotients, the counting function of resonances and of zeros of the Selberg zeta function follow fractal upper bounds similar to \eqref{a48} where the trapped set is replaced by the limit set of the Schottky group (see Guillop\'{e}, Lin and Zworski \cite{GuLiZw04_01}). For more general asymptotically hyperbolic manifolds with hyperbolic trapped sets, Datchev and Dyatlov \cite{DaDy13_01} have obtained a result similar to the refined upper bound \eqref{a51}. In the context of semiclassical transfer operators, Arnoldi, Faure and Weich \cite{ArFaWe17_01} have shown a fractal upper bound of the number of Ruelle resonances. We send the readers to the nice presentation of Naud \cite{Na17_01} for an instructive introduction and for more references on the fractal Weyl law for resonances.

Even if there seems to be no result on the optimality of the previous fractal upper bounds, the Weyl law for the resonances is known to be false for quantum graphs. More precisely, let $\Gamma = ( \CV , \CE )$ be a graph with a finite number of vertices $\CV$ and edges $\CE$ where $\CE$ is the union of $N$ internal edges (between two vertices) of length $\ell_{j}$, $j = 1 , \ldots , N$, and $M$ external semi-infinite edges (starting from one vertex). On $L^{2} ( \CE )$, let $P = - \partial_{x}^{2}$ on each edge with Kirchhoff type boundary condition at each vertex. In this setting, Davies and Pushnitski \cite{DaPu11_01}, and Davies, Exner and Lipovsk\'{y} \cite{DaExLi10_01} have proved that
\begin{equation} \label{a52}
\# \big(\res ( P ) \cap B ( 0 , R )\big)  = \frac{2}{\pi} W R + \CO ( 1 ) ,
\end{equation}
as $R \to + \infty$ for some $0 \leq W \leq W_{0} = \ell_{1} + \cdots + \ell_{N}$. Moreover, they provide examples where $W < W_{0}$ and even $W = 0$ (see Example 5.2 of \cite{DaExLi10_01}). If $0 < W < W_{0}$, $P$ does not follow the Weyl law in the sense that the leading coefficient is smaller than the one expected classically. But, $P$ satisfies both the fractal upper bound and the fractal lower bound. If $W = 0$, $P$ does not satisfy the expected fractal lower bound (which is of order $R$).

Now we state our main result, it shows that the fractal upper bounds of kind \eqref{a48} are not sharp in general. More precisely,

\begin{theorem}\sl \label{a1}
For any $d \in [1 , n]$, there exist operators $P = \Op ( p )$ in settings generalizing \eqref{a49}--\eqref{a15} and positive numbers $E_ {0} , \delta$ such that

$i)$ the Hausdorff, Minkowski and packing dimensions of $K ( [ E_{0} - \delta , E_{0} + \delta ] )$ are precisely $d$.

$ii)$ for any $A > 0$, there exists $C > 0$ such that 
\begin{equation*}
\# \big(\res ( P ) \cap [ E_{0} - \delta , E_{0} + \delta ] + i [ - A h , 0 ]\big) \leq C ,
\end{equation*}
for $h$ small enough.

$iii)$ there exists a smooth function $G ( x , \xi )$ on $\R^{2 n}$ with $G ( x , \xi ) = x \cdot \xi$ outside a compact set such that
\begin{equation*}
H_{p} G ( \rho ) \gtrsim \min \big( \dist \big( \rho , K ( [ E_{0} - \delta , E_{0} + \delta ] ) \big)^{2} , 1 \big) ,
\end{equation*}
for $\rho \in p^{- 1} ( [ E_{0} - \delta , E_{0} + \delta ] )$.
\end{theorem}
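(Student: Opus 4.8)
\textit{Overall approach.} The plan is to build the examples around a highly degenerate, non-hyperbolic trapped set. For $d$ an integer one takes a potential $V$ that attains its global maximum $E_{0}>0$ \emph{exactly} on a $d$-dimensional closed ball $B\subset\{x''=0\}\subset\R^{n}$, that has a non-degenerate maximum in the $n-d$ transverse directions at each point of $B$, that satisfies $V<E_{0}$ off $B$ and decreases steeply enough away from $B$, and that is a compactly supported perturbation of $0$. For non-integer $d$ a variant is needed in which $B$ is replaced by a product of a ball with a self-similar Cantor set; realising this together with the non-trapping requirement forces one to leave the class \eqref{a15}, which is the role of the more general symbols of Section~\ref{s9}. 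The guiding heuristic for $(ii)$ is that, because all transitions of $V$ are smooth, the ``cavity'' at energy $E_{0}$ has reflection coefficient $O(h^{\infty})$, so the cavity frequencies are pushed a distance $\gg h$ (in fact $\gtrsim 1$) below the real axis; a flat, parabolically unstable maximum is simply too weak to quantise into resonances near the real axis.

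\textit{Items $(iii)$ and $(i)$.} First I would exhibit the escape function. Away from a neighbourhood of $T:=B\times\{0\}$ in phase space the flow on $p^{-1}([E_{0}-\delta,E_{0}+\delta])$ is non-trapping, and $G=x\cdot\xi$ already works, since $H_{p}(x\cdot\xi)=2|\xi|^{2}-x\cdot\nabla V\gtrsim 1$ there for $V$ chosen monotone enough in the escape direction. Near the interior of $T$ a direct computation gives $H_{p}(x\cdot\xi)=2|\xi|^{2}+2Q(x'')\gtrsim\dist(\cdot,T)^{2}$, with $Q$ the transverse quadratic part of $E_{0}-V$, so nothing has to be corrected; only near the edge $\partial B$ of the plateau must one add a compactly supported $\chi$, vanishing to second order on $T$, obtained by averaging $\dist(\cdot,T)^{2}$ along the outward-escaping flow, so that $H_{p}(x\cdot\xi+\chi)\gtrsim\min(\dist(\cdot,T)^{2},1)$ on the whole energy shell. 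This is $(iii)$, with $G=x\cdot\xi+\chi$. Since $G$ is bounded on bounded sets and vanishes on $T$, the inequality forces any bounded trajectory onto $T$ (otherwise $G$ would increase strictly between two limiting values on $T$, both $0$); as $\dist(\cdot,T)^{2}\gtrsim|E_{0}-E|$ on $p^{-1}(E)$ this gives $K(E)=\emptyset$ for $E\neq E_{0}$ and $K(E_{0})=T$ (homoclinic-free), hence $K([E_{0}-\delta,E_{0}+\delta])=T$, which is a ball (resp.\ a product of a ball and a self-similar Cantor set), whose Hausdorff, Minkowski and packing dimensions all coincide and equal $d$. This is $(i)$.

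\textit{Item $(ii)$, the hard part.} I would argue in three steps. \emph{(a)} By $(iii)$ and the standard non-trapping resolvent estimate, for $|\re z-E_{0}|\geq\varepsilon_{0}$ the distorted resolvent is $O(h^{-1})$, so there are no resonances there with $\im z\geq-C_{0}h\log(1/h)$; it suffices to count resonances in the fixed box $(E_{0}-\varepsilon_{0},E_{0}+\varepsilon_{0})+i[-Ah,0]$. \emph{(b)} There, a microlocal normal form near $T$ conjugates $P$ modulo $O(h^{\infty})$ to a model which, for $d<n$, factorises as an inverted harmonic oscillator in the $n-d$ transverse directions times the flat-barrier operator $-h^{2}\Delta_{x'}+W(x')$ in the $d$ plateau directions, with $W=V(\cdot,0)-E_{0}$ (so $W\equiv 0$ on $B$, $W<0$ off $B$); the inverted-oscillator factor contributes resonances only at $E_{0}-ih(2k+\cdots)$ with $h$-independent finite multiplicities, of which finitely many (a number depending on $A$) lie in $[-Ah,0]$. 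For $d=n$ the model is just $-h^{2}\Delta+W$. \emph{(c)} It remains to bound by a constant the number of resonances of the flat-barrier operator near $0$ in the box. This is the crux: since $W$ is flat on $B$ and the transition to $\{W<0\}$ is smooth, the reflection off $\partial B$ is $O(h^{\infty})$, so a state localised on the plateau leaks out almost entirely and the would-be cavity frequencies acquire width $\gtrsim 1$, hence lie outside the window; only the energies within a shrinking neighbourhood of the plateau level, where a turning-point analysis at $\partial B$ is needed, contribute $\leq C$ resonances. To turn this into a bound on the \emph{number} of resonances I would build a Grushin problem for $P_{\theta}-z$ in the box whose finite-rank part is of rank $\leq C$, using a partial Fourier transform in the plateau variables $x'$ to trivialise the flat part and construct an approximate inverse away from $\leq C$ exceptional points, and then apply the usual characteristic-value/Jensen argument. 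For non-integer $d$ the Cantor factor consists of degenerate fixed points carrying no dynamics, so it contributes to the dimension but not to the resonance count, and is absorbed either into the same estimate or into a separation of variables $p=p_{1}(x_{1},\xi_{1})+p_{2}(x_{2},\xi_{2})$ assembling lower-dimensional building blocks.

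\textit{Main obstacle.} The genuinely new difficulty is step $(c)$: the available fractal upper bound technology is designed for \emph{hyperbolic} trapped sets, where the count is of the expected order $h^{-d/2}$, whereas here one must work in the opposite, degenerate regime and show that a flat, parabolically unstable trapped set of dimension $d$ carries only $O(1)$ resonances near the real axis. This requires a bespoke parametrix near the plateau and --- most delicate --- a careful analysis at the edge $\partial B$, where ``flat'' meets ``decreasing'' and where the surviving $O(1)$ resonances live, and, for non-integer $d$, at the intricate geometry where the plateau meets the Cantor set.
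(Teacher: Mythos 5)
Your construction is a genuinely different route from the paper's, and I do not think it works. The paper's trapped set consists of \emph{two isolated hyperbolic fixed points} $\rho_{1},\rho_{2}$ joined by a $d$-dimensional family of heteroclinic orbits $\CH$ going one way only (no return orbits from $\rho_{2}$ to $\rho_{1}$). This asymmetry is what lets them build an escape function increasing \emph{along} $\CH$ (Proposition~\ref{a9}), so that conjugation by the associated weight "removes'' the transport part $\CH$ and reduces the count to the two hyperbolic points, for which $O(1)$ resonances near $E_0$ is known barrier-top theory \cite{BoFuRaZe07_01,BoFuRaZe11_01}. Because \eqref{a17} cannot be realized by a scalar symbol of the form \eqref{a15} (Proposition~\ref{a16}), they resort to absorbing potentials, order-$4$ operators, and matrix symbols. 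Your trapped set is of a wholly different nature: a $d$-dimensional continuum of \emph{degenerate} equilibria (flat plateau of $V$), with no non-constant trapped trajectories, realized inside the elementary class \eqref{a50}. This sidesteps Proposition~\ref{a16} because your $K(E_{0})$ is not of the form \eqref{a17}, but it also puts you outside all the resonance theory the paper can invoke.

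The gap is step (c), and I think it is fatal as stated. The heuristic "smooth edges $\Rightarrow$ reflection $O(h^{\infty})$ $\Rightarrow$ resonances pushed $\gtrsim 1$ below the axis'' is incorrect precisely at the energy $E_{0}$ where the counting takes place. At $E=E_{0}$ the turning points at $\partial B$ are degenerate (the classical momentum vanishes to infinite order on $B$), and the reflection coefficient there is $O(1)$, not $O(h^{\infty})$; the exponentially small above-barrier reflection only sets in for $E-E_{0}\gg h^{2/3}$ or so, depending on the flatness of the transition. The instructive limiting case is the $1$D square barrier $V=E_{0}\one_{[-1,1]}$: its cavity modes $E_{n}=E_{0}+(n\pi h/2)^{2}$ put $\approx h^{-1/2}$ resonances in $[E_{0}-\delta,E_{0}+\delta]+i[-Ah,0]$, which \emph{saturates} the fractal bound with $d=1$, and it is not at all clear that smoothing the edges — while it does widen the high-lying modes — depletes the $O(h^{-1/2})$ modes closest to $E_{0}$, where the reflection is not small. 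You acknowledge that $\partial B$ is "the crux,'' but the Grushin construction you sketch would have to rule out exactly these low-lying plateau modes, and you give no mechanism for that; the $C^{\infty}$-flat junction between the plateau and the decreasing part is a notoriously delicate degenerate-turning-point problem for which no $O(1)$ resonance count is known. In short: your items $(i)$ and $(iii)$ are plausible for your geometry, but $(ii)$ is an open conjecture, not a proof, and the best available evidence (the square-barrier limit, the fractal upper bound itself) points the other way. The paper avoids this entirely by keeping the trapped fixed points isolated and hyperbolic, where the $O(1)$ count is established, and making the $d$-dimensional part consist of \emph{transporting} heteroclinics that an escape function can kill.
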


If the upper bound \eqref{a48} was also a lower bound for the counting function of resonances, we would have like $h^{- d / 2}$ resonances in $[ E_{0} - \delta , E_{0} + \delta ] + i [ - A h , 0 ]$. Therefore, this type of upper bounds are generally not sharp and the number of resonances is not always given by the dimension of the trapped set.

The idea behind Theorem \ref{a1} is that all the trapped trajectories do not play the same role in general. Some of them ``create'' the resonances whereas others only ``transport'' the resonant states. Then, we construct operators with few of the first ones and a lot of the second ones. On the contrary, in many classical examples (well in the island situation, hyperbolic setting with symmetry, \ldots), all the trapped trajectories have a similar role and create resonances. Note that the semiclassical asymptotic of resonances appearing in Theorem \ref{a1} is given by Theorem 6.1 of \cite{BoFuRaZe18_01}.

For the operators given by Theorem \ref{a1}, the energies in a pointed neighborhood of $E_{0}$ are non-trapping. In other words,
\begin{equation*}
K ( [ E_{0} - \delta , E_{0} + \delta ] ) = K ( E_{0} ) .
\end{equation*}
In our examples, the trapped set $K ( E_{0} )$ is the union of two fixed points and a set of (non-constant) Hamiltonian curves. Since there is at least one non-constant curve, we have $d \geq 1$. On the other hand, these curves belong to a Lagrangian manifold and then $d \leq n$. When $d \in \N$, the trapped set $K ( E_{0} )$ can contain an open subset of a $d$-dimensional manifold.

The structure of the trapped set behind Theorem \ref{a1} is rather simple and explained in Section \ref{s6}. We then construct an escape function which is not constant in the ``transport'' part of this trapped set in Section \ref{s7}. This allows us to finish the proof of the theorem in Section \ref{s8}. Unfortunately, the realization of this trapped set with a Schr\"{o}dinger operator as in \eqref{a49}--\eqref{a15} is impossible (see Section \ref{s4}). But we construct concrete examples of operators satisfying Theorem \ref{a1} as Schr\"{o}dinger operators with an absorbing potential in Section \ref{s3}, selfadjoint differential operators of order $4$ in Section \ref{s2} and matrix-valued selfadjoint Schr\"{o}dinger operators in Section \ref{s5}. In all these cases, the resonances can be defined through the complex distortion method.

\section{Resonance upper bound}

\subsection{The geometric setting} \label{s6}

\begin{figure}
\begin{center}
\begin{picture}(0,0)%
\includegraphics{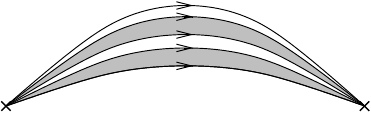}%
\end{picture}%
\setlength{\unitlength}{987sp}%
\begingroup\makeatletter\ifx\SetFigFont\undefined%
\gdef\SetFigFont#1#2#3#4#5{%
  \reset@font\fontsize{#1}{#2pt}%
  \fontfamily{#3}\fontseries{#4}\fontshape{#5}%
  \selectfont}%
\fi\endgroup%
\begin{picture}(11863,4090)(3332,-3575)
\put(3601,-3511){\makebox(0,0)[b]{\smash{{\SetFigFont{9}{10.8}{\rmdefault}{\mddefault}{\updefault}$\rho_{1}$}}}}
\put(15076,-3511){\makebox(0,0)[b]{\smash{{\SetFigFont{9}{10.8}{\rmdefault}{\mddefault}{\updefault}$\rho_{2}$}}}}
\put(9151,-2386){\makebox(0,0)[b]{\smash{{\SetFigFont{9}{10.8}{\rmdefault}{\mddefault}{\updefault}$\CH$}}}}
\end{picture}%
\end{center}
\caption{The trapped set $K ( E_{0} )$.} \label{f6}
\end{figure}

We prove the upper bound on the number of resonances given in Theorem \ref{a1} $ii)$ in the setting \eqref{a49}--\eqref{a15}, the generalization to the examples of Section \ref{s9} being straightforward. In the sequel, we consider an energy $E_{0} > 0$ and we assume that the trapped set at energy $E_{0}$ is given by
\begin{equation} \label{a17}
K ( E_{0} ) = \{ \rho_{1} , \rho_{2} \} \cup \CH .
\end{equation}
Here, $\rho_{j} = ( x_{j} , \xi_{j} ) \in \R^{2 n}$, $j = 1 , 2$, are hyperbolic fixed points of $H_{p}$. It means that
\begin{equation} \label{a53}
p ( x , \xi ) = E_{0} + \sum_{k = 1}^{n} \frac{\lambda_{j , k}}{2} \big( ( \xi_{k} - \xi_{j , k} )^{2} - ( x_{k} - x_{j , k} )^{2}\big ) + \CO \big( ( x - x_{j} , \xi - \xi_{j} )^{3} \big) ,
\end{equation}
near $\rho_{j}$ with $\lambda_{j , k} > 0$ after a possible symplectomorphism. The set $\CH$ is the union of the heteroclinic trajectories starting from $\rho_{1}$ and ending to $\rho_{2}$, that is Hamiltonian trajectories $t \longmapsto \rho ( t ) = \exp ( t H_{p} ) ( \rho )$ such that $\rho ( t ) \to \rho_{1}$ as $t \to - \infty$ and $\rho ( t ) \to \rho_{2}$ as $t \to + \infty$ (see Figure \ref{f6}). Assumption \eqref{a17} is equivalent to $K ( E_{0} ) = \Lambda_{+}^{1} \cap \Lambda_{-}^{2}$ where $\Lambda_{+}^{1}$ is the outgoing stable manifold associated to $\rho_{1}$ and $\Lambda_{-}^{2}$ is the incoming stable manifold associated to $\rho_{2}$. In particular, there is no heteroclinic trajectory from $\rho_{2}$ to $\rho_{1}$.

\subsection{Construction of an escape function} \label{s7}

In this part, we build an escape function which is increasing along the Hamiltonian curves outside $K ( E_{0} )$ and along $\CH$. Such a construction is possible only because there is no heteroclinic trajectory going back from $\rho_{2}$ to $\rho_{1}$.

\begin{proposition}\sl \label{a9}
There exist $\delta > 0$ and a smooth function $G ( x , \xi )$ on $\R^{2 n}$ with $G ( x , \xi ) = x \cdot \xi$ outside a compact set such that
\begin{equation} \label{a8}
H_{p} G ( \rho ) \gtrsim \min \big( \dist ( \rho , \{ \rho_{1} , \rho_{2} \} )^{2} , 1 \big) ,
\end{equation}
for $\rho \in p^{- 1} ( [ E_{0} - \delta , E_{0} + \delta ] )$.
\end{proposition}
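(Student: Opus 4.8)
The plan is to build $G$ as a sum of three pieces: a global linear term $x\cdot\xi$ that already works away from a large compact set, a contribution $G_j$ localized near each hyperbolic fixed point $\rho_j$ that handles the quadratic behavior there, and a ``transport'' term $G_\CH$ that is strictly increasing along the heteroclinic trajectories in $\CH$. The gluing is done with cutoff functions and one has to check that the positive contributions dominate the (controlled) error terms created by differentiating the cutoffs. The crucial structural input — used exactly once but essentially — is the last sentence of Section~\ref{s6}: since $K(E_0)=\Lambda_+^1\cap\Lambda_-^2$ and there is no heteroclinic trajectory from $\rho_2$ back to $\rho_1$, the flow on $K(E_0)$ is ``one-directional'', so a globally increasing function along $\CH$ can exist without any topological obstruction.

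\textbf{Step 1: local escape functions at $\rho_1,\rho_2$.} Using the normal form \eqref{a53}, near $\rho_j$ there are symplectic coordinates in which $p-E_0 = \sum_k \frac{\lambda_{j,k}}{2}(\eta_k^2-y_k^2)+\CO(|(y,\eta)|^3)$. For the quadratic part the function $g_j(y,\eta)=y\cdot\eta$ satisfies $H_{p}g_j = \sum_k \lambda_{j,k}(y_k^2+\eta_k^2) \gtrsim |(y,\eta)|^2 = \dist(\rho,\rho_j)^2$ on the energy shell near $\rho_j$; the cubic remainder contributes a term $\CO(|(y,\eta)|^3)$ which is absorbed for $\rho$ in a small enough ball $B(\rho_j,r_0)$. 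One also checks that, after undoing the symplectomorphism, $g_j$ differs from $x\cdot\xi$ by a smooth function, which will matter for the gluing.

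\textbf{Step 2: transport function along $\CH$.} On the compact piece of $\CH$ sitting outside $B(\rho_1,r_0/2)\cup B(\rho_2,r_0/2)$, I want a smooth $G_\CH$ with $H_pG_\CH\gtrsim 1$. Because every trajectory in $\CH$ enters $B(\rho_1,r_0)$ as $t\to-\infty$ and $B(\rho_2,r_0)$ as $t\to+\infty$ and is non-constant, the ``middle part'' of each heteroclinic curve is traversed in uniformly bounded time, and the exit times from a neighborhood of $\CH$ are uniformly bounded; then a standard averaging construction (integrate an arbitrary positive function along the flow over a bounded time window, or use Lemma-type escape-function constructions for non-trapping regions as in \cite{DyZw19_01}) produces a smooth function on a neighborhood of $\CH\setminus(B(\rho_1,r_0/2)\cup B(\rho_2,r_0/2))$ with $H_pG_\CH\gtrsim 1$ there. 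The coordinates near $\rho_j$ must be chosen so that $g_j$ is \emph{already} increasing along the incoming/outgoing directions that make up $\CH$, so that $g_j$ and $G_\CH$ have compatible signs of derivative in the overlap regions; this is where the absence of a return trajectory $\rho_2\to\rho_1$ is essential.

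\textbf{Step 3: gluing.} Write $G = \chi_\infty\, x\cdot\xi + \sum_{j}\chi_j\, \widetilde g_j + \chi_\CH\, G_\CH$ for a suitable smooth partition-type family $\chi_\infty,\chi_1,\chi_2,\chi_\CH$, where $\widetilde g_j$ is $g_j$ modified by a large constant and expressed back in the original coordinates so that $\widetilde g_j = x\cdot\xi$ outside the local chart. Compute $H_pG$: on the energy shell, in each region one of the three ``good'' terms gives the lower bound \eqref{a8}, and the terms where $H_p$ falls on a cutoff are supported in transition zones where one of the good terms is already bounded below by a positive constant, hence absorbable after scaling the constants. Outside a large compact set all cutoffs are constant and $G=x\cdot\xi$, giving $H_pG = 2\xi^2 = 2E_0 + \CO(\delta) \gtrsim 1$ there (this also fixes how small $\delta$ must be). Finally $\dist(\rho,\{\rho_1,\rho_2\})$ is comparable to $\dist(\rho,K(E_0))$ up to the region near $\CH$ where the right-hand side of \eqref{a8} is $\asymp 1$ anyway, so the stated form of the estimate follows.

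\textbf{Main obstacle.} The delicate point is Step~2 together with the sign-compatibility in Step~3: ensuring that the transport function on $\CH$ can be chosen to match, in the overlap balls $B(\rho_j,r_0)\setminus B(\rho_j,r_0/2)$, the local functions $g_j$ coming from the normal form — i.e.\ that $H_p(g_j)$ and $H_p(G_\CH)$ do not have conflicting signs where both cutoffs vary. This requires a careful choice of the linearizing symplectic coordinates at each $\rho_j$ (adapted to the stable/unstable directions along which $\CH$ approaches $\rho_j$) and uses crucially that $\CH$ flows \emph{out} of $\rho_1$ and \emph{into} $\rho_2$ with no return, so there is a globally consistent ``direction of increase''. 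Everything else is bookkeeping with cutoffs and absorbing lower-order terms by taking $\delta$ small and the additive constants large.
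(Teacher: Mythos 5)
Your plan — local escape functions near the fixed points, a transport term increasing along $\CH$, and a gluing — is the same high-level strategy the paper uses, and your Steps 1 and 2 are in order (the averaging along flow lines over a bounded time window does give a function with $H_pG_\CH\gtrsim 1$ on the middle of $\CH$). But Step 3 has a genuine gap, and your "main obstacle" paragraph misdiagnoses what the real obstacle is.

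Write, in the transition annulus $B(\rho_j,r_0)\setminus B(\rho_j,r_0/2)$, $\chi_j+\chi_\CH=1$. Then
\begin{equation*}
H_p\big(\chi_j\widetilde g_j+\chi_\CH G_\CH\big)=\chi_j H_p\widetilde g_j+\chi_\CH H_p G_\CH+\big(\widetilde g_j-G_\CH\big)H_p\chi_j .
\end{equation*}
The first two terms are fine; the last one is not. You need $H_p\widetilde g_j$ and $H_pG_\CH$ to have compatible signs (which is what you focus on), but that is not sufficient: the cross term scales with the \emph{difference of function values} $\widetilde g_j-G_\CH$ in the overlap, not with any derivative. Since you explicitly add a large constant $C_2$ to $\widetilde g_2$, and $G_\CH$ is normalized at the $\rho_1$ end, there is no reason $\widetilde g_2$ and $G_\CH$ agree near $\rho_2$; the difference can be of the same order as $C_2$, and cannot be beaten by $H_pG_\CH\gtrsim 1$ by "scaling constants." Worse, the mismatch is not even a constant across $\CH$: different heteroclinics have different transit times, so the value that $G_\CH$ reaches at $\partial B(\rho_2,r_0)$ is a nonconstant function along $\CH\cap S(\rho_2,r_0)$, while $\widetilde g_2$ is. So there is a whole function's worth of discrepancy that cutoffs alone cannot swallow.

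What the paper does instead is precisely designed to eliminate these cross terms. Rather than gluing three different functions with cutoffs, it constructs one smooth $G_0$ near $K(E_0)$ by solving the transport ODE $H_pG_0=\chi_1H_pG_1+\chi_0F+\chi_2H_pG_2$ with initial data $G_0=G_1$ on $S(\rho_1,\varepsilon/4)$, where the trajectory-wise constant $F$ (formula \eqref{a7}) is tuned, shooting-method style, so that $G_0$ lands \emph{exactly} on $G_2$ when it reaches $S(\rho_2,\varepsilon/4)$. Because $G_0$ coincides with $G_1$ and $G_2$ near the respective fixed points, no cutoff between the "local" and "transport" pieces is needed, hence no problematic cross term. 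Taking $C_2\gg 1$ is what makes $F\geq 1$ on $\CH$, and it is harmless because $C_2$ is chosen before the outer gluing parameter $\nu$ in $G=\nu\psi_0G_0+G_\infty$. The role of the no-return assumption (no trajectory $\rho_2\to\rho_1$) is exactly to make this transport problem well-posed — it is not merely a "sign compatibility" condition. To repair your proof you would need to replace the additive constant in $\widetilde g_2$ by a function solving the value-matching problem trajectory by trajectory; at that point you have rediscovered the paper's $F$ and $G_0$.
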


In particular, Proposition \ref{a9} implies Theorem \ref{a1} $iii)$.

\begin{proof}
Near each hyperbolic fixed point $\rho_{j} = ( x_{j} , \xi_{j} )$, $j=1 , 2$, we choose the function 
\begin{equation*}
G_{j} ( x , \xi ) = ( x - x_{j} ) \cdot ( \xi - \xi_{j} ) + C_{j} ,
\end{equation*}
where $C_{1} = 0$ and $C_{2} \gg 1$ are constants. From \eqref{a53}, there exists $\varepsilon > 0$ such that, for all $\rho = ( x , \xi )$ in $B ( \rho_{j} , \varepsilon ) = \{ \rho \in \R^{2 n} ; \ \vert \rho - \rho_{j} \vert < \varepsilon \}$, we have
\begin{equation} \label{a3}
H_{p} G_{j} ( \rho ) \gtrsim \vert \rho - \rho_{j} \vert^{2} .
\end{equation}
We now follow the shooting method by connecting $G_{1}$ and $G_{2}$ using an increasing function along $\CH$ for $C_{2}$ large enough.

Note that, when $0$ is a hyperbolic fixed point of $H_{p}$, the function $t \longmapsto \vert \rho ( t ) \vert^{2}$ is non-decreasing (resp. non-increasing) if $\rho \in \Lambda_{+}$ (resp. $\rho \in \Lambda_{-}$) as long as $\rho ( t)$ stays in a neighborhood of $0$. To prove this point, it is enough to compute the time derivative of $\vert \rho ( t ) \vert^{2}$. After shrinking possibly $\varepsilon$ and since $\CH \cap S ( \rho_{1} , \varepsilon / 4 )$ is a compact set, there exists $T > 0$ such that, for all $\rho \in \CH \cap S ( \rho_{1} , \varepsilon / 4 )$,
\begin{equation*}
\left\{ \begin{aligned}
&\text{for all } t \geq T , \text{ we have } \rho ( t ) \in B ( \rho_{2} , \varepsilon / 4 ) ,  \\
&\text{if } \rho ( t ) \in B ( \rho_{1} , 7 \varepsilon / 16 ) \text{ for  }t \geq 0 \text{ then } \rho ( s ) \in B ( \rho_{1} , 7 \varepsilon / 16 ) \text{ for all } s \leq t ,   \\
&\text{if } \rho ( t ) \in B ( \rho_{2} , 7 \varepsilon / 16 ) \text{ for } t \geq 0 \text{ then } \rho ( s ) \in B ( \rho_{2} , 7 \varepsilon / 16 ) \text{ for all } t \leq s .
\end{aligned} \right.
\end{equation*}
Since $\CH \cap S ( \rho_{1} , \varepsilon / 4 )$ is compact and $1 / 4 < 3 / 8 < 7 / 16 < 1 / 2$, there exists $\Omega$, an open neighborhood of $\CH \cap S ( \rho_{1} , \varepsilon / 4 )$ contained in $B ( \rho_{1} , 3 \varepsilon / 8 )$, such that, for all $\rho \in \Omega$,
\begin{equation*}
\left\{ \begin{aligned}
&\text{we have } \rho ( T ) \in B ( \rho_{2} , \varepsilon / 4 ) ,  \\
&\text{if } \rho ( t ) \in B ( \rho_{1} , 3 \varepsilon / 8 ) \text{ for  } 0 \leq t \leq T , \text{ then } \rho ( s ) \in B ( \rho_{1} , \varepsilon / 2 ) \text{ for all } 0 \leq s \leq t ,   \\
&\text{if } \rho ( t ) \in B ( \rho_{2} , 3 \varepsilon / 8 ) \text{ for } 0 \leq t \leq T , \text{ then } \rho ( s ) \in B ( \rho_{2} , \varepsilon / 2 ) \text{ for all } t \leq s \leq T .
\end{aligned} \right.
\end{equation*}

For $j = 1 , 2$, let $\chi_{j} \in C^{\infty}_{0} ( B ( \rho_{j} , \varepsilon ) ; [ 0 , 1 ] )$ with $\chi_{j} = 1$ near $B ( \rho_{j} , 3 \varepsilon / 4 )$. We also consider a function $\chi_{0} \in C^{\infty}_{0} ( \R^{2 n} \setminus ( B ( \rho_{1} , \varepsilon / 2 ) \cup B ( \rho_{2} , \varepsilon / 2 ) ) ; [ 0 , 1 ] )$ with $\chi_{0} = 1$ near
\begin{equation*}
\big\{ \rho ( t ) ; \ \rho \in \Omega , \ t \in [ 0 , T ] \big\} \setminus \big( B ( \rho_{1} , 3 \varepsilon / 4 ) \cup B ( \rho_{2} , 3 \varepsilon / 4 ) \big) .
\end{equation*}
For $\rho \in \big\{ \omega ( t ) ; \ \omega \in \Omega , \ t \in [ 0 , T ] \big\}$, we define the function
\begin{equation} \label{a7}
F ( \rho ) = \Big( G_{2} ( \omega ( T ) ) - G_{1} ( \omega ( 0 ) ) - \int_{0}^{T} ( \chi_{1} H_{p} G_{1} + \chi_{2} H_{p} G_{2} ) ( \omega ( s ) ) \, d s \Big) / \int_{0}^{T} \chi_{0} ( \omega ( s ) ) \, d s ,
\end{equation}
where $\rho = \omega ( t )$ for some $\omega \in \Omega$ and $t \in [ 0 , T ]$. From the choice of the cut-off functions $\chi_{\bullet}$, $F$ is well-defined, smooth and constant along the Hamiltonian curves from $\Omega$.

We then solve the problem
\begin{equation*}
\left\{ \begin{aligned}
&H_{p} G_{0} = \chi_{1} H_{p} G_{1} + \chi_{0} F + \chi_{2} H_{p} G_{2}  &&\text{ near } \CH \setminus \big( B ( \rho_{1} , \varepsilon / 4 ) \cup B ( \rho_{2} , \varepsilon / 4 ) \big) ,  \\
&G_{0} = G_{1} &&\text{ on } S ( \rho_{1} , \varepsilon / 4 ) .
\end{aligned} \right.
\end{equation*}
That is
\begin{equation} \label{a2}
G_{0} ( \omega ( t ) ) = G_{1} ( \omega ( 0 ) ) + \int_{0}^{t} \big( \chi_{1} H_{p} G_{1} + \chi_{0} F + \chi_{2} H_{p} G_{2} \big) ( \omega ( s ) ) \, d s ,
\end{equation}
for $\omega \in \Omega$ and $0 \leq t \leq T$. For $\rho = \omega ( t )$, with $\omega \in \Omega$ and $0 \leq t \leq T$, near $S ( \rho_{1} , \varepsilon / 4)$, \eqref{a2} becomes
\begin{equation*}
G_{0} ( \rho ) = G_{1} ( \omega ( 0 ) ) + \int_{0}^{t} H_{p} G_{1} ( \omega ( s ) ) \, d s  = G_{1} ( \rho ) ,
\end{equation*}
since $\chi_{0} ( \omega ( s ) ) = \chi_{2} ( \omega ( s ) ) = 0$ and $\chi_{1} ( \omega ( s ) ) = 1$ for $s \in [ 0 , t ]$. On the other hand, for $\rho = \omega ( t )$, with $\omega \in \Omega$ and $0 \leq t \leq T$, near $S ( \rho_{2} , \varepsilon / 4)$, \eqref{a2} gives
\begin{equation*}
G_{0} ( \rho ) = G_{1} ( \omega ( 0 ) ) + \int_{0}^{T} ( \chi_{1} H_{p} G_{1} ) ( \omega ( s ) ) \, d s  + F ( \rho ) \int_{0}^{T} \chi_{0} ( \omega ( s ) ) \, d s  + \int_{0}^{t} ( \chi_{2} H_{p} G_{2} \big) ( \omega ( s ) ) \, d s ,
\end{equation*}
since $\chi_{0} ( \omega ( s ) ) = \chi_{1} ( \omega ( s ) ) = 0$ for $s \in [ t , T ]$. Using \eqref{a7}, it becomes
\begin{align*}
G_{0} ( \rho ) = G_{2} ( \omega ( T ) )  + \int_{T}^{t} ( \chi_{2} H_{p} G_{2} \big) ( \omega ( s ) ) \, d s = G_{2} ( \omega ( T ) )  + \int_{T}^{t} H_{p} G_{2} ( \omega ( s ) ) \, d s  = G_{2} ( \rho ) ,
\end{align*}
since $\chi_{2} ( \omega ( s ) ) = 1$ for $s \in [ t , T ]$.

Thus, if we glue $G_{0}$ with $G_{1}$ in $B ( \rho_{1} , \varepsilon / 4 )$ and with $G_{2}$ in $B ( \rho_{2} , \varepsilon / 4 )$, we obtain a smooth function (still denoted $G_{0}$) defined in a neighborhood of $K ( E_{0} )$ and such that
\begin{equation} \label{a4}
H_{p} G_{0} = \chi_{1} H_{p} G_{1} + \chi_{0} F + \chi_{2} H_{p} G_{2} ,
\end{equation}
near $K ( E_{0} )$. For $C_{2} > 0$ fixed large enough, \eqref{a7} gives that $F ( \rho ) \geq 1$ on $\CH$. From the properties of the functions $\chi_{\bullet}$, \eqref{a3} and \eqref{a4}, we eventually obtain
\begin{equation} \label{a5}
H_{p} G_{0} ( \rho ) \gtrsim \dist ( \rho , \{ \rho_{1} , \rho_{2} \} )^{2} ,
\end{equation}
for $\rho$ near $K ( E_{0} )$.

We now glue $G_{0}$ with an escape function outside the trapped set, using a construction of G\'erard and Sj\"{o}strand \cite{GeSj87_01}. Let $\varphi_{0} \prec \psi_{0} \in C^{\infty}_{0} ( \R^{2 n} ; [ 0 , 1 ] )$ be such that $\varphi_{0} = 1$ near $K ( E_{0} )$ and $\psi_{0}$ is supported in the region where \eqref{a5} holds true. From \cite[Appendix]{GeSj87_01}, there exist a smooth function $G_{\infty} \in C^{\infty} ( \R^{2 n} )$ and $\delta > 0$ such that $G_{\infty} ( x , \xi ) = x \cdot \xi$ outside a compact set, $G_{\infty} = 0$ near $K ( E_{0} )$ and
\begin{equation} \label{a6}
H_{p} G_{\infty} \gtrsim \left\{ \begin{aligned}
&0 &&\text{ in } p^{- 1} ( [ E_{0} - \delta , E_{0} + \delta ] ) , \\
&1 &&\text{ in } p^{- 1} ( [ E_{0} - \delta , E_{0} + \delta ] ) \setminus \varphi_{0}^{- 1} ( 1 ) .
\end{aligned} \right.
\end{equation}
We then set
\begin{equation*}
G = \nu \psi_{0} G_{0} + G_{\infty} ,
\end{equation*}
with $\nu > 0$ chosen later. Using \eqref{a5}, \eqref{a6}, the properties of $\psi_{0}$ and
\begin{equation*}
H_{p} G = \nu \psi_{0} H_{p} G_{0} + \nu G_{0} H_{p} \psi_{0} + H_{p} G_{\infty} ,
\end{equation*}
we get \eqref{a8} for $\nu$ small enough. Moreover, $G ( x , \xi ) = x \cdot \xi$ outside a compact set and $G = \nu G_{0}$ near $K ( E_{0} )$.
\end{proof}

\subsection{End of the proof} \label{s8}

Using the escape function of Proposition \ref{a9}, we prove Theorem \ref{a1} $ii)$. At this point, the reasoning follows the general strategy of Sj\"{o}strand (see e.g. \cite{Sj90_01,Sj97_01}) to get upper bound for the counting function of resonances. We decompose the proof in two parts. The first step is to show that, for all $C_{0} > 0$, there exists $C > 0$ such that
\begin{equation} \label{a54}
\# \big( \res ( P ) \cap B ( E_{0} , C_{0} h ) \big) \leq C ,
\end{equation}
for $h$ small enough. When the trapped set consists of a unique hyperbolic fixed point, this follows in this $C^{\infty}$ setting from Proposition 4.2 of \cite{BoFuRaZe07_01} applied to the weighted operator $Q_{z}$ of Section 3 of \cite{BoFuRaZe11_01}. The escape function of Proposition \ref{a9} allows to ``remove'' (conjugating by polynomially bounded weights) the heteroclinic set $\CH$ from $K ( E_{0} )$ which then reduces to the two hyperbolic fixed points $\rho_{1} , \rho_{2}$.  We can then adapt the arguments of \cite{BoFuRaZe07_01,BoFuRaZe11_01} to get \eqref{a54}.

It remains to show that there exists $\delta > 0$ such that, for all $A > 0$, there exists $C > 0$ such that $P$ has no resonances in
\begin{equation*}
[ E_{0} - \delta , E_{0} + \delta ] + i [ - A h , 0 ] \setminus B ( E_{0} , C h ) ,
\end{equation*}
for $h$ small enough. In the case of a unique hyperbolic fixed point, this has been proved in Theorem 3.1 i) of \cite{BoFuRaZe11_01}. Thanks to the escape function given by Proposition \ref{a9}, this proof can be directly adapted to the present setting and gives the required resonance free region (see Section 3.4 of \cite{BoFuRaZe11_01}).

\section{Examples} \label{s9}

In this part, we construct semiclassical operators compatible with Theorem \ref{a1} and satisfying the geometric configuration \eqref{a17}. It is natural to try to achieve this goal with the simplest example of operators: the semiclassical Schr\"{o}dinger operators with an electric potential. That is
\begin{equation} \label{a50}
P = \Op ( p ) = - h^{2} \Delta + V ( x ) ,
\end{equation}
with $p  ( x , \xi ) = \xi^{2} + V ( x )$ and $V \in C^{\infty}_{0} ( \R^{n} ; \R )$. In that case, the two fixed points are necessarily of the form $\rho_{j} = ( x_{j} , 0 )$ for $j = 1 , 2$. Moreover, if $t \longmapsto ( x ( t ) , \xi ( t ) )$ is an Hamiltonian trajectory, so is $t \longmapsto ( x ( - t ) , - \xi ( - t ) )$. Thus, if $\CH \neq \emptyset$, there exists also a Hamiltonian trajectory from $\rho_{2}$ to $\rho_{1}$ which is impossible under \eqref{a17}. This shows that there is no interesting example of the form \eqref{a50} satisfying \eqref{a17}.

\Subsection{The case of Schr\"{o}dinger operators} \label{s4}

Since it is not possible to construct an example with an electric potential only, it is natural to try by adding a magnetic potential and a metric. Thus, we consider the more general setting of \eqref{a15}.

\begin{proposition}\sl \label{a16}
There exists no scalar symbol $p ( x , \xi )$ as in \eqref{a15} which satisfies the geometric assumption \eqref{a17}.
\end{proposition}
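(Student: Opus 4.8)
The plan is to extend to the general symbol \eqref{a15} the time--reversal argument used above for electric Schr\"odinger operators: from a heteroclinic orbit running from $\rho_1$ to $\rho_2$ it produces a heteroclinic orbit from $\rho_2$ to $\rho_1$, which is excluded by \eqref{a17}.

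First I would complete the square in $\xi$: since $a(x)$ is invertible,
\[
p(x,\xi) = a(x)\big(\xi - \xi_*(x)\big)\cdot\big(\xi - \xi_*(x)\big) + V_{\mathrm{eff}}(x), \qquad \xi_*(x) := -\tfrac12\, a(x)^{-1}b(x),
\]
with $V_{\mathrm{eff}}(x) := c(x) - \tfrac14\, b(x)\cdot a(x)^{-1}b(x)$; here $\xi_*$ has compact support. A fixed point $(x_0,\xi_0)$ of $H_p$ satisfies $\partial_\xi p = 2a(x_0)(\xi_0-\xi_*(x_0)) = 0$, hence $\xi_0 = \xi_*(x_0)$, and then $\partial_x p = \nabla V_{\mathrm{eff}}(x_0) = 0$; so $\rho_j = (x_j,\xi_*(x_j))$ with $x_j$ a critical point of $V_{\mathrm{eff}}$. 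Now introduce the fiberwise reflection through the centres of the momentum ellipsoids,
\[
\iota(x,\xi) := \big(x,\ 2\xi_*(x) - \xi\big).
\]
It is a smooth involution, it coincides with $(x,\xi)\mapsto(x,-\xi)$ outside a compact set, it satisfies $p\circ\iota = p$ (immediate from the completed--square form, as $\iota$ sends $\xi-\xi_*(x)$ to its opposite), and it fixes each $\rho_j$.

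The heart of the matter, and the step I expect to be the only real obstacle, is that $\iota$ reverses the Hamilton flow: for every integral curve $t\mapsto\gamma(t)=\exp(tH_p)(\rho)$ of $H_p$, the curve $t\mapsto\tilde\gamma(t):=\iota(\gamma(-t))$ is again an integral curve of $H_p$ (equivalently $\iota_*H_p = -H_p$). This is \emph{not} automatic from $p\circ\iota = p$: when the magnetic two--form $d\xi_*$ does not vanish, $\iota$ is neither symplectic nor anti--symplectic, so the flow--reversal has to be checked by hand. I would do this by a direct computation. Writing $\eta = \xi-\xi_*(x)$, one has $\partial_\xi p = 2a(x)\eta$ and $\partial_{x_l}p = (\partial_{x_l}a)\,\eta\cdot\eta - 2\big(a(x)\,\partial_{x_l}\xi_*(x)\big)\cdot\eta + \partial_{x_l}V_{\mathrm{eff}}$; the $\eta$--coordinate of $\tilde\gamma(t)$ equals $-\eta(-t)$, and when one plugs $\tilde\gamma$ into Hamilton's equations the cross terms $(a\,\partial_{x_l}\xi_*)\cdot\eta$ arising from differentiating $\xi_*(x(-t))$ cancel exactly against the corresponding terms in $\partial_x p(\iota(\gamma(-t)))$. (For $b=0$ this reduces to the classical $t\mapsto(x(-t),-\xi(-t))$ symmetry used above.)

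Finally, suppose for contradiction that a symbol $p$ as in \eqref{a15} satisfies \eqref{a17} (with $\CH\neq\emptyset$, as intended there; see Figure \ref{f6} and the discussion after Theorem \ref{a1}). Choose $\gamma\in\CH$, so $\gamma(t)\to\rho_1$ as $t\to-\infty$ and $\gamma(t)\to\rho_2$ as $t\to+\infty$. Then $\tilde\gamma(t)=\iota(\gamma(-t))$ is an integral curve of $H_p$ with $\tilde\gamma(t)\to\iota(\rho_2)=\rho_2$ as $t\to-\infty$ and $\tilde\gamma(t)\to\iota(\rho_1)=\rho_1$ as $t\to+\infty$; since $p(\tilde\gamma(t)) = p(\gamma(-t)) = E_0$ and $\tilde\gamma$ is bounded, $\tilde\gamma(\R)\subset K(E_0)$. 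Thus $\tilde\gamma$ is a heteroclinic trajectory from $\rho_2$ to $\rho_1$, contradicting the fact recorded after \eqref{a17} that $K(E_0)=\Lambda_+^1\cap\Lambda_-^2$ contains no such orbit.
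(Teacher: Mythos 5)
There is a genuine gap, and it sits exactly where you flagged the risk: the claimed flow-reversal $\iota_*H_p=-H_p$ is false as soon as the magnetic $2$-form $d(\xi_*\cdot dx)$ is nonzero, and the ``direct computation'' you appeal to in fact \emph{disproves} it. Writing $\eta=\xi-\xi_*(x)$ and $\tilde\gamma(t)=\iota(\gamma(-t))$, one has $\dot{\tilde x}(t)=2a\tilde\eta$ as you say, but for the $\xi$-equation a short computation gives
\[
\dot{\tilde\xi}_l(t)=-4\,\nabla\xi_{*,l}\cdot(a\eta)\;-\;(\partial_{x_l}a)\eta\cdot\eta\;+\;2\,(a\eta)\cdot\partial_{x_l}\xi_*\;-\;\partial_{x_l}V_{\mathrm{eff}},
\]
whereas what is needed is
\[
-\partial_{x_l}p(\tilde\gamma(t))=-(\partial_{x_l}a)\eta\cdot\eta\;-\;2\,(a\eta)\cdot\partial_{x_l}\xi_*\;-\;\partial_{x_l}V_{\mathrm{eff}}.
\]
Subtracting, the requirement is $\nabla\xi_{*,l}\cdot(a\eta)=(a\eta)\cdot\partial_{x_l}\xi_*$, i.e.\ $\sum_k (a\eta)_k\big(\partial_{x_k}\xi_{*,l}-\partial_{x_l}\xi_{*,k}\big)=0$; the two cross terms involve $D\xi_*$ and its transpose, so they match only when $D\xi_*$ is symmetric, i.e.\ $d(\xi_*\cdot dx)=0$. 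This is exactly the classical statement that a magnetic field breaks time-reversal: the curve $t\mapsto\iota(\gamma(-t))$ is a Hamiltonian curve for the symbol with the opposite magnetic $2$-form, not for $p$. And if $d(\xi_*\cdot dx)=0$, then (by compact support of $\xi_*$) $\xi_*=\nabla\varphi$ for some $\varphi\in C^\infty_0$, and the gauge transformation $(x,\xi)\mapsto(x,\xi+\nabla\varphi)$ already removes $b$ entirely, reducing to the electric case handled before the proposition; so the new case your involution was meant to address is precisely the one where it fails.

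The paper's proof takes a wholly different route: after a gauge change putting both fixed points at $\xi_j=0$, it shows $x_1\neq x_2$, proves the outgoing stable manifold of $\rho_2$ projects nicely onto $x$-space, and then runs a \emph{topological degree} argument. The escaping trajectories in $\widetilde\Lambda_+^2$ give a continuous map $F:[0,\infty[\times\S^{n-1}\to\S^{n-1}$ which at $t=0$ is homotopic to the identity and as $t\to\infty$ collapses to a constant, while never hitting $x_1$; comparing degrees ($1$ vs.\ $0$) is the contradiction. This argument does not try to manufacture a heteroclinic from $\rho_2$ to $\rho_1$, and it uses quantitatively the special structure (quadratic, elliptic in $\xi$) to get the nice projection \eqref{a10}. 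If you want to salvage a time-reversal-style argument you would have to handle the genuinely magnetic case, and I do not see how to do that with a fiberwise reflection.
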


\begin{proof}
We work by contradiction and consider a symbol $p ( x , \xi )$ as in \eqref{a15} whose trapped set at energy $E_{0} > 0$ consists of two hyperbolic fixed points $\rho_{j} = ( x_{j} , \xi_{j} )$, $j = 1 , 2$, and possible heteroclinic trajectories from $\rho_{1}$ to $\rho_{2}$. The first step is to show that $x_{1} \neq x_{2}$ by contradiction. Assume that $x_{1} = x_{2}$ and note that
\begin{equation*}
H_{p} =
\begin{pmatrix}
2 a ( x ) \xi + b ( x ) \\
- \partial_{x} a ( x ) \xi \cdot \xi - \partial_{x} b ( x ) \cdot \xi - \partial_{x} c ( x )
\end{pmatrix} .
\end{equation*}
Since $H_{p} ( x_{j} , \xi_ {j} ) = 0$, we get $2 a ( x_{1} ) \xi_{1} + b ( x_{1} ) = 0 = 2 a ( x_{1} ) \xi_{2} + b ( x_{1} )$. Using that $a ( x_{1} )$ is invertible, we deduce $\xi_{1} = \xi_{2}$ and then $\rho_{1} = \rho_{2}$, a contradiction. This proves $x_{1} \neq x_{2}$.

We will now make a change of gauge. Let $\varphi \in C^{\infty}_{0} ( \R^{n} )$ be such that $\partial_{x} \varphi ( x_{j} ) = \xi_{j}$. We define $\widetilde{p} ( x , \xi ) = p \circ \kappa ( x , \xi )$ where $\kappa ( x , \xi ) = ( x , \xi + \partial_{x} \varphi ( x ) )$ is a symplectic change of variables. In particular,
\begin{equation} \label{a11}
\widetilde{p} ( x , \xi ) = \widetilde{a} ( x ) \xi \cdot \xi + \widetilde{b} ( x ) \cdot \xi + \widetilde{c} ( x ) ,
\end{equation}
where $\widetilde{a} = a$ and $\widetilde{b} , \widetilde{c}$ satisfy the same assumptions that $b , c$. Since $\kappa$ is symplectic, the Hamiltonian curves of $p$ are transformed by $\kappa^{- 1}$ into the Hamiltonian curves of $\widetilde{p}$. Then, the flows of $H_{\widetilde{p}}$ and $H_{p}$ have the same geometry and $\rho_{j}$ becomes $\widetilde{\rho}_{j} = ( x_{j} , 0 )$. As before,
\begin{equation*}
H_{\widetilde{p}} =
\begin{pmatrix}
2 \widetilde{a} ( x ) \xi + \widetilde{b} ( x ) \\
- \partial_{x} \widetilde{a} ( x ) \xi \cdot \xi - \partial_{x} \widetilde{b} ( x ) \cdot \xi - \partial_{x} \widetilde{c} ( x )
\end{pmatrix} .
\end{equation*}
Since $H_{\widetilde{p}} ( x_{j} , 0 ) = 0$, we have $\widetilde{b} ( x_{j} ) = \partial_{x} \widetilde{c} ( x_{j} ) = 0$. Since $\widetilde{p} ( x_{j} , 0 ) = E_{0}$, we get $c ( x_{j} ) = E_{0}$. Summing up,
\begin{equation*}
\widetilde{p} ( x_{j} , \xi ) = a ( x_{j} ) \xi \cdot \xi + E_{0} ,
\end{equation*}
for all $\xi \in \R^{n}$. In particular, $\rho_{j}$ is the unique point of $\widetilde{p}^{- 1} ( E_{0} )$ above $x_{j}$, that is
\begin{equation} \label{a14}
\{ ( x , \xi ) ; \ \widetilde{p} ( x , \xi ) = E_{0} \text{ and } x = x_{j} \} = \{ ( x_{j} , 0 ) \} .
\end{equation}

Let $\widetilde{\Lambda}_{+}^{j} \subset \R^{2 n}$ denote the stable outgoing manifold associated to the hyperbolic fixed point $\widetilde{\rho}_{j}$ of $H_{\widetilde{p}}$. We now prove that
\begin{equation} \label{a10}
\widetilde{\Lambda}_{+}^{j} \text{ projects nicely on the } x\text{-space locally near } \widetilde{\rho}_{j}.
\end{equation}
For that, it is enough to prove by contradiction that the tangent space $T_{\widetilde{\rho}_{j}} \widetilde{\Lambda}_{+}^{j}$ projects nicely on the $x$-space. Then, assume that there exists $T = ( T_{x} , T_{\xi} ) \in T_{\widetilde{\rho}_{j}} \widetilde{\Lambda}_{+}^{j}$ with $T \neq 0$ and $T_{x} = 0$. Let $\gamma : [ - 1 , 1 ] \longrightarrow \widetilde{\Lambda}_{+}^{j}$ be a smooth curve such that $\gamma ( 0 ) = \widetilde{\rho}_{j}$ and $\dot{\gamma} ( 0 ) = T$. Since $\widetilde{\Lambda}_{+}^{j} \subset \widetilde{p}^{- 1} ( E_{0} )$, we have $\widetilde{p} ( \gamma ( t ) ) = E_{0}$ for all $t \in [ - 1 , 1 ]$. Differentiating this relation leads to $d_{\gamma ( t )} \widetilde{p} ( \dot{\gamma} ( t ) ) =  0$ and eventually
\begin{equation*}
\Hess_{\gamma ( t )} ( \widetilde{p} ) \dot{\gamma} ( t ) \cdot \dot{\gamma} ( t ) + d_{\gamma ( t )} \widetilde{p} ( \ddot{\gamma} ( t ) ) = 0 .
\end{equation*}
Computing at $t = 0$ and using $d_{\widetilde{\rho}_{j}} \widetilde{p} = 0$ and \eqref{a11}, we deduce $0 = \Hess_{\widetilde{\rho}_{j}} ( \widetilde{p} ) ( 0 , T_{\xi} ) \cdot ( 0 , T_{\xi} ) = 2 \widetilde{a} ( x_{j} ) T_{\xi} \cdot T_{\xi}$. Since $\widetilde{a} ( x_{j} )$ is positive, it yields $T_{\xi} = 0$ which provides a contradiction and implies \eqref{a10}.

We now use an argument of algebraic topology. For $\varepsilon > 0$ small enough, $t \geq 0$ and $\omega \in \S^{n - 1}$, we define
\begin{equation}
x ( t , \omega ) = \pi_{x} \big( \exp ( t H_{\widetilde{p}} ) \big( x_{2} + \varepsilon \omega , \xi ( \varepsilon \omega ) \big) \big) ,
\end{equation}
where $\pi_{x} ( x , \xi ) = x$ is the $x$-space projection and $\xi ( \varepsilon \omega )$ is the unique point of $\R^{n}$ such that $( x_{2} + \varepsilon \omega , \xi ( \varepsilon \omega ) ) \in \widetilde{\Lambda}_{+}^{2}$ (see \eqref{a10}). In particular, the map $( t , \omega ) \longmapsto x ( t , \omega )$ is smooth. Since $\exp ( t H_{\widetilde{p}} ) ( x_{2} + \varepsilon \omega , \xi ( \varepsilon \omega ) )$ belongs to $\widetilde{p}^{- 1} ( E_{0} )$ and does to reach $\widetilde{\rho}_{1}$ and $\widetilde{\rho}_{2}$, we deduce
\begin{equation} \label{a13}
\forall t \geq 0 , \quad \forall \omega \in \S^{n - 1}, \qquad x ( t , \omega ) \notin \{ x_{1} , x_{2} \} ,
\end{equation}
from \eqref{a14}. On the other hand, since the Hamiltonian trajectories in $\widetilde{\Lambda}_{+}^{2}$ are trapped as $t \to - \infty$ but non-trapped from \eqref{a17}, they escape to infinity at $t \to + \infty$. Using that a Hamiltonian trajectory which enters in an outgoing region never comes back (this follows for instance from Proposition \ref{a9}) and that $\S^{n - 1}$ is compact, these trajectories escape uniformly to infinity. In other words,
\begin{equation} \label{a12}
\forall R > 0 , \quad \exists T > 0 , \quad \forall t \geq T, \quad  \forall \omega \in \S^{n - 1} , \qquad \vert x ( t , \omega ) \vert \geq R .
\end{equation}
For all $t \geq 0$ and $\omega \in \S^{n  - 1}$, we set
\begin{equation}
F ( t , \omega ) = \frac{\frac{x ( t , \omega ) - x_{2}}{\vert x ( t , \omega ) - x_{2} \vert^{2}} - \frac{x_{1} - x_{2}}{\vert x_{1} - x_{2} \vert^{2}}}{\Big\vert \frac{x ( t , \omega ) - x_{2}}{\vert x ( t , \omega ) - x_{2} \vert^{2}} - \frac{x_{1} - x_{2}}{\vert x_{1} - x_{2} \vert^{2}} \Big \vert} .
\end{equation}
From \eqref{a13}, $x ( t , \omega ) - x_{2}$ does not vanish and $\frac{x ( t , \omega ) - x_{2}}{\vert x ( t , \omega ) - x_{2} \vert^{2}} - \frac{x_{1} - x_{2}}{\vert x_{1} - x_{2} \vert^{2}} = 0$, which is equivalent to $x ( t , \omega ) = x_{1}$, is impossible. Thus, $F : [ 0 , + \infty [ \times \S^{n - 1} \longrightarrow \S^{n - 1}$ is well-defined and continuous. By definition,
\begin{equation*}
F ( 0 , \omega ) = \frac{\frac{\varepsilon \omega}{\varepsilon^{2}} - \frac{x_{1} - x_{2}}{\vert x_{1} - x_{2} \vert^{2}}}{\big\vert \frac{\varepsilon \omega}{\varepsilon^{2}} - \frac{x_{1} - x_{2}}{\vert x_{1} - x_{2} \vert^{2}} \big \vert} = \omega + \CO ( \varepsilon ) ,
\end{equation*}
uniformly for $\omega \in \S^{n - 1}$. In particular, $F ( 0 , \cdot )$ is homotopic to $\Id_{\S^{n - 1}}$ for $\varepsilon$ small enough. On the other hand, \eqref{a12} implies that
\begin{equation*}
F ( t , \omega ) = \frac{x_{2} - x_{1}}{\vert x_{2} - x_{1} \vert} + o ( 1 )  ,
\end{equation*}
as $t$ goes to $+ \infty$ uniformly for $\omega \in \S^{n - 1}$. Then, $F ( T , \cdot )$ is homotopic to the constant map $f_{\infty} ( \omega ) = \frac{x_{2} - x_{1}}{\vert x_{2} - x_{1} \vert}$ for $T$ large enough. Using that $F$ provides a homotopy between $F ( 0 , \cdot )$ and $F ( T , \cdot )$ and taking the degree, which is invariant under homotopy, we eventually obtain
\begin{equation}
1 = \deg ( \Id_{\S^{n - 1}} ) = \deg ( F ( 0 , \cdot ) ) = \deg ( F ( T , \cdot ) ) = \deg ( f_{\infty} ) = 0 ,
\end{equation}
which is impossible (see \cite[Section 23]{Fu95_01} for more details on the degree). This concludes the proof of Proposition \ref{a16}.
\end{proof}

\Subsection{A Schr\"{o}dinger operator with an absorbing potential} \label{s3}

We have seen in the previous section that it is impossible to realize the geometric setting \eqref{a17} with a scalar selfadjoint elliptic differential operator of order $2$ which is a perturbation of $- h^{2} \Delta$ at infinity. Using the theory of resonances for ``black box'' operators due to Sj\"{o}strand and Zworski \cite{SjZw91_01}, one can also consider selfadjoint semiclassical pseudodifferential operators which coincide with $- h^{2} \Delta$ outside a compact set. Unfortunately, we have not been able to find such an operator satisfying \eqref{a17}. Instead, we give in this part a non-selfadjoint Schr\"{o}dinger operator in $\R^{2}$ for which \eqref{a17} holds. Such an example can be easily generalized in higher dimensions $n \geq 2$.

\begin{figure}
\begin{center}
\begin{picture}(0,0)%
\includegraphics{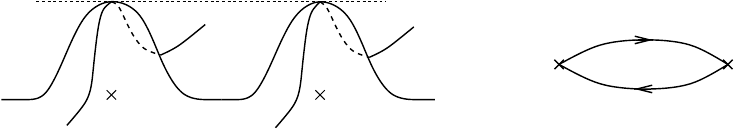}%
\end{picture}%
\setlength{\unitlength}{987sp}%
\begingroup\makeatletter\ifx\SetFigFont\undefined%
\gdef\SetFigFont#1#2#3#4#5{%
  \reset@font\fontsize{#1}{#2pt}%
  \fontfamily{#3}\fontseries{#4}\fontshape{#5}%
  \selectfont}%
\fi\endgroup%
\begin{picture}(23488,4121)(-14368,-4905)
\put(-13499,-1036){\makebox(0,0)[rb]{\smash{{\SetFigFont{9}{10.8}{\rmdefault}{\mddefault}{\updefault}$E_{0}$}}}}
\put(-10574,-4561){\makebox(0,0)[b]{\smash{{\SetFigFont{9}{10.8}{\rmdefault}{\mddefault}{\updefault}$x^{1}$}}}}
\put(-3899,-4561){\makebox(0,0)[b]{\smash{{\SetFigFont{9}{10.8}{\rmdefault}{\mddefault}{\updefault}$x^{2}$}}}}
\put(6226,-1636){\makebox(0,0)[b]{\smash{{\SetFigFont{9}{10.8}{\rmdefault}{\mddefault}{\updefault}$\gamma_{1}$}}}}
\put(6226,-4261){\makebox(0,0)[b]{\smash{{\SetFigFont{9}{10.8}{\rmdefault}{\mddefault}{\updefault}$\gamma_{2}$}}}}
\put(3601,-3511){\makebox(0,0)[b]{\smash{{\SetFigFont{9}{10.8}{\rmdefault}{\mddefault}{\updefault}$\rho_{1}$}}}}
\put(9001,-3511){\makebox(0,0)[b]{\smash{{\SetFigFont{9}{10.8}{\rmdefault}{\mddefault}{\updefault}$\rho_{2}$}}}}
\end{picture}%
\end{center}
\caption{The potential of $Q$ and the set $K_{Q} ( E_{0} )$.} \label{f4}
\end{figure}

\subsubsection{The initial operator}

Let $V \in C^{\infty}_{0} ( \R^{2} )$ be a radial barrier of height $E_{0} > 0$. More precisely, we assume that $V$ is radial, $x \cdot \nabla V ( x ) < 0$ for $x$ in the interior of $\supp V \setminus \{ 0 \}$, $V$ has a non-degenerate maximum at $x = 0$ (say $V ( x ) = E_{0} - \lambda x^{2} + \CO ( x^{3} )$ near $0$ with $\lambda > 0$) and $V$ has a large scattering angle in the sense of Appendix B.3 of \cite{BoFuRaZe18_01} (roughly speaking, it means that the angle between the two asymptotic directions of every non-trapped Hamiltonian trajectory of $\xi^{2} + V ( x )$ at energy $E_{0}$ is at least $\pi / 3$). For $L \geq 1$ large enough, we define
\begin{equation*}
Q = - h^{2} \Delta + V ( x_{1} + L , x_{2} ) + V ( x_{1} - L , x_{2} ) ,
\end{equation*}
the Schr\"{o}dinger operator with double bumps. The potential of $Q$ is illustrated in Figure \ref{f4}. The trapped set at energy $E_{0}$ is given by
\begin{equation*}
K_{Q} ( E_{0} ) = \{ \rho_{1} , \rho_{2} \} \cup \gamma_{1} \cup \gamma_{2} ,
\end{equation*}
where $\rho_{1} = ( x^{1} , 0 )$ with $x^{1} = ( - L , 0 )$ and $\rho_{2} = ( x^{2} , 0 )$ with $x^{2} = ( L , 0 )$ are hyperbolic fixed points and $\gamma_{1}$ (resp. $\gamma_{2}$) is a heteroclinic trajectory from $\rho_{1}$ to $\rho_{2}$ (resp. from $\rho_{2}$ to $\rho_{1}$). The asymptotic of the resonances for this operator has been studied in Example 6.9 of \cite{BoFuRaZe18_01}.

\subsubsection{Removal of $\gamma_{2}$}

The next step is to ``remove'' $\gamma_{2}$ using an absorbing perturbation, without ``touching'' a neighborhood of $\gamma_{1}$. This idea has already been applied by Royer \cite{Ro10_01}, Datchev and Vasy \cite[Section 5.3]{DaVa12_01} or in various situations (see e.g. Examples  4.14, 5.9, 7.11, \ldots) in a previous work \cite{BoFuRaZe18_01}. Since the $x$-space projection of $\gamma_{1}$ and $\gamma_{2}$ is the same (equal to $[ x^{1} , x^{2} ]$), one can not directly use a potential or a differential operator. We propose two ways to get around this difficulty.

First, one can add an absorbing pseudodifferential operator whose symbol is supported near a point of $\gamma_{2}$. More precisely, let $w ( x , \xi ) \in C^{\infty}_{0} ( \R^{4} )$ supported in a vicinity of $( 0 , 0 , - \sqrt{E_{0}} , 0 ) \in \gamma_{2}$ with $w ( 0 , 0 , - \sqrt{E_{0}} , 0 ) \neq 0$. In particular, $w = 0$ near $\gamma_{1}$. We also take $\chi \in C^{\infty}_{0} ( \R^{2} )$ such that $w ( \cdot , \xi ) \prec \chi$ for all $\xi \in \R^{2}$. We then set
\begin{equation} \label{a41}
P = Q - i \alpha ( h ) \chi \Op ( w )^{*} \Op ( w ) \chi 
\end{equation}
for some $h \vert \ln h \vert \leq \alpha ( h ) \leq 1$. The non-negative pseudodifferential operator $\chi \Op ( w )^{*} \Op ( w ) \chi$ is elliptic near $( 0 , 0 , - \sqrt{E_{0}} , 0 ) $ and negligible near $\gamma_{1}$. Then, the trapped set of $P$ at energy $E_{0}$ becomes
\begin{equation*}
K_{P} ( E_{0} ) = \{ \rho_{1} , \rho_{2} \} \cup \gamma_{1} ,
\end{equation*}
and \eqref{a17} holds true with $\CH = \gamma_{1}$. The presence of the cut-off function $\chi$ guarantees that $P$ is a compactly supported perturbation of $- h^{2} \Delta$ and that its resonances can be defined through the theory of ``black box'' operators (see Sj\"{o}strand and Zworski \cite{SjZw91_01}).

Another approach is to change the $x$-space projection of $\gamma_{2}$ using a small magnetic field and then to add an absorbing potential. For that, let us define
\begin{equation} \label{a31}
\widetilde{Q} = \Op ( \widetilde{q} ) \qquad \text{ with } \qquad \widetilde{q} ( x , \xi ) = q ( x , \xi ) + \varepsilon ( \xi_{1} - \sqrt{E_{0}} ) x_{2} \psi ( x ) ,
\end{equation}
where
\begin{equation*}
q ( x , \xi ) = \xi^{2} + V ( x_{1} + L , x_{2} ) + V ( x_{1} - L , x_{2} ) ,
\end{equation*}
is the symbol of $Q$, $\varepsilon > 0$ and $\psi \in C^{\infty}_{0} ( \R^{2} )$ is supported near $0$ and $\psi ( 0 ) \neq 0$. Using the stability of transversal heteroclinic trajectories, we claim that, for $\varepsilon > 0$ small enough, there is exactly one heteroclinic trajectory $\gamma_{1}^{\varepsilon}$ (resp. $\gamma_{2}^{\varepsilon}$) from $\rho_{1}$ to $\rho_{2}$ (resp. from $\rho_{2}$ to $\rho_{1}$) which is close to $\gamma_{1}$ (resp. $\gamma_{2}$). Indeed, for $\varepsilon$ small enough, the outgoing (resp. incoming) stable manifold from $\rho_{1}$ (resp. $\rho_{2}$) projects nicely on the $x$-space near $0$. Let $\varphi_{+}^{1} ( x , \varepsilon )$ (resp. $\varphi_{-}^{2} ( x , \varepsilon )$) denotes its generating function vanishing at $x^{1}$ (resp. $x^{2}$) which is smooth in the $( x , \varepsilon )$ variables. Since $\widetilde{q} ( x , \nabla \varphi_{+}^{1} ( x , \varepsilon ) ) = \widetilde{q} ( x , \nabla \varphi_{-}^{2} ( x , \varepsilon ) ) = E_{0}$ and $\partial_{x_{1}} \varphi_{+}^{1} ( 0 , 0 ) =\partial_{x_{1}} \varphi_{-}^{2} ( 0 , 0 ) = \sqrt{E_{0}}$, a point $ ( 0 , x_{2} )$ belongs to a heteroclinic trajectory of $\widetilde{q}$ iff $\partial_{x_{2}} \varphi_{+}^{1} ( 0 , x_{2} , \varepsilon ) = \partial_{x_{2}} \varphi_{-}^{2} ( 0 , x_{2} , \varepsilon )$. Moreover, we have $\partial_{x_{2}} \varphi_{+}^{1} ( 0 , x_{2} , 0 ) = \partial_{x_{2}} \varphi_{-}^{2} ( 0 , x_{2} , 0 )$ and $\partial_{x_{2}}^{2} \varphi_{+}^{1} ( 0 , x_{2} , 0 ) \neq \partial_{x_{2}}^{2} \varphi_{-}^{2} ( 0 , x_{2} , 0 )$ (i.e. the intersection of the Lagrangian manifolds is transversal) since the functions $\varphi_{+}^{1} ( x , 0 )$ (resp. $\varphi_{-}^{2} ( x , 0 )$) are radial with respect to $x^{1}$ (resp. $x^{2}$). Eventually the claim for $\gamma_{1}^{\varepsilon}$ follows from the implicit function theorem. The case of $\gamma_{2}^{\varepsilon}$ is similar. Since $\gamma_{1}$ is include in $\{ \xi_{1} = \sqrt{E_{0}} \} \cap \{ x_{2} = 0 \}$ near $0$, $\gamma_{1}$ is an heteroclinic trajectory of $\widetilde{q}$ and then $\gamma_{1}^{\varepsilon} = \gamma_{1}$ for $\varepsilon$ small enough. We now prove by a contradiction argument that
\begin{equation} \label{a32}
\gamma_{2}^{\varepsilon} \cap ( \supp \psi \times \R^{2} ) \not\subset \{ x_{2} = 0 \}
\end{equation}
for $\varepsilon \neq 0$. If \eqref{a32} does not hold, the Hamiltonian equations for $\gamma_{2}^{\varepsilon} = ( x^{\varepsilon} ( t ) , \xi^{\varepsilon} ( t ) )$ gives
\begin{equation*}
\dot{x_{2}^{\varepsilon}} = 2 \xi_{2}^{\varepsilon} \qquad \text{ and } \qquad \dot{\xi_{2}^{\varepsilon}} = - \varepsilon ( \xi_{1}^{\varepsilon} - \sqrt{E_{0}} ) \psi ( x^{\varepsilon} ) .
\end{equation*}
Since $\xi_{1}^{0} = - \sqrt{E_{0}}$ when $\gamma_{2}^{0}$ passes above the support of $\psi$, we have $\dot{\xi_{2}^{\varepsilon}} \neq 0$ when $\gamma_{2}^{\varepsilon}$ passes above the support of $\psi$. Thus, $x_{2}^{\varepsilon}$ can not be constant, we get a contradiction with $x_{2}^{\varepsilon} = 0$ and \eqref{a32} holds true. Let us consider a function $\chi \in C^{\infty}_{0} ( \R^{2} ; [ 0 , 1 ] )$ with $\supp \chi \subset \supp \psi \setminus \{ x_{2} = 0 \}$ which does not vanish identically on $\pi_{x} ( \gamma_{2}^{\varepsilon} )$. We set
\begin{equation} \label{a40}
P = \widetilde{Q} - i \alpha ( h ) \chi .
\end{equation}
for some $h \vert \ln h \vert \leq \alpha ( h ) \leq 1$. By construction, $P$ is a Schr\"{o}dinger operator with a magnetic field and \eqref{a17} hold with $\CH = \gamma_{1}^{\varepsilon} = \gamma_{1}$ since the absorbing potential $\chi$ ``remove'' $\gamma_{2}^{\varepsilon}$ without ``touching'' $\gamma_{1}^{\varepsilon}$.

\begin{figure}
\begin{center}
\begin{picture}(0,0)%
\includegraphics{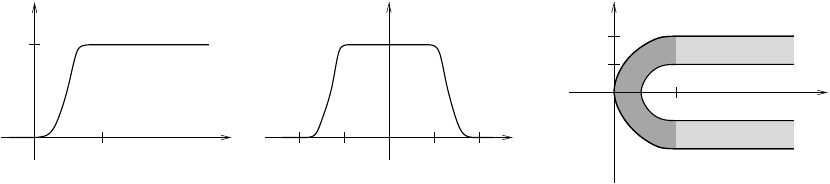}%
\end{picture}%
\setlength{\unitlength}{1184sp}%
\begingroup\makeatletter\ifx\SetFigFont\undefined%
\gdef\SetFigFont#1#2#3#4#5{%
  \reset@font\fontsize{#1}{#2pt}%
  \fontfamily{#3}\fontseries{#4}\fontshape{#5}%
  \selectfont}%
\fi\endgroup%
\begin{picture}(22094,4844)(-10371,-6383)
\put(226,-5761){\makebox(0,0)[lb]{\smash{{\SetFigFont{9}{10.8}{\rmdefault}{\mddefault}{\updefault}$0$}}}}
\put(1201,-5761){\makebox(0,0)[b]{\smash{{\SetFigFont{9}{10.8}{\rmdefault}{\mddefault}{\updefault}$1 / 2$}}}}
\put(2401,-5761){\makebox(0,0)[b]{\smash{{\SetFigFont{9}{10.8}{\rmdefault}{\mddefault}{\updefault}$1$}}}}
\put(301,-2461){\makebox(0,0)[lb]{\smash{{\SetFigFont{9}{10.8}{\rmdefault}{\mddefault}{\updefault}$1$}}}}
\put(5626,-2536){\makebox(0,0)[rb]{\smash{{\SetFigFont{9}{10.8}{\rmdefault}{\mddefault}{\updefault}$\nu$}}}}
\put(5626,-3361){\makebox(0,0)[rb]{\smash{{\SetFigFont{9}{10.8}{\rmdefault}{\mddefault}{\updefault}$\nu / 2$}}}}
\put(7651,-4561){\makebox(0,0)[b]{\smash{{\SetFigFont{9}{10.8}{\rmdefault}{\mddefault}{\updefault}$\sqrt{\nu}$}}}}
\put(9751,-3736){\makebox(0,0)[rb]{\smash{{\SetFigFont{9}{10.8}{\rmdefault}{\mddefault}{\updefault}$\widetilde{\varphi}_{-}^{2}$}}}}
\put(9751,-2086){\makebox(0,0)[rb]{\smash{{\SetFigFont{9}{10.8}{\rmdefault}{\mddefault}{\updefault}$\widetilde{\varphi}_{+}^{1}$}}}}
\put(3181,-4861){\makebox(0,0)[lb]{\smash{{\SetFigFont{9}{10.8}{\rmdefault}{\mddefault}{\updefault}$\widetilde{x}_{2}$}}}}
\put(1726,-3361){\makebox(0,0)[lb]{\smash{{\SetFigFont{9}{10.8}{\rmdefault}{\mddefault}{\updefault}$\chi_{2}$}}}}
\put(-10049,-2836){\makebox(0,0)[lb]{\smash{{\SetFigFont{9}{10.8}{\rmdefault}{\mddefault}{\updefault}$1$}}}}
\put(-7649,-5761){\makebox(0,0)[b]{\smash{{\SetFigFont{9}{10.8}{\rmdefault}{\mddefault}{\updefault}$1$}}}}
\put(-9224,-5761){\makebox(0,0)[lb]{\smash{{\SetFigFont{9}{10.8}{\rmdefault}{\mddefault}{\updefault}$0$}}}}
\put(-4349,-4861){\makebox(0,0)[lb]{\smash{{\SetFigFont{9}{10.8}{\rmdefault}{\mddefault}{\updefault}$\widetilde{x}_{1}$}}}}
\put(-6374,-3361){\makebox(0,0)[lb]{\smash{{\SetFigFont{9}{10.8}{\rmdefault}{\mddefault}{\updefault}$\chi_{1}$}}}}
\put(11476,-3661){\makebox(0,0)[lb]{\smash{{\SetFigFont{9}{10.8}{\rmdefault}{\mddefault}{\updefault}$\widetilde{x}_{1}$}}}}
\put(6301,-1711){\makebox(0,0)[lb]{\smash{{\SetFigFont{9}{10.8}{\rmdefault}{\mddefault}{\updefault}$\widetilde{x}_{2}$}}}}
\end{picture}%
\end{center}
\caption{The cut-off functions $\chi_{\bullet}$ and the phase $\widetilde{\varphi}$.} \label{f5}
\end{figure}

\subsubsection{Heteroclinic set of dimension $2$}

At this point, the heteroclinic set $\CH$ consists of a single trajectory $\gamma_{1}$. We now explain how to have a heteroclinic set of higher dimension. For that, we will add a small potential to $P$ localized near $0$ the following way. We write the operator $P$ of \eqref{a41} or \eqref{a40} as
\begin{equation}
P = P_{\rm re} - i P_{\rm im} \qquad \text{ with } \qquad P_{\rm re} = \Op ( p_{\rm re} ) ,
\end{equation}
where $p_{\rm re} ( x , \xi ) = q ( x , \xi )$ or $\widetilde{q} ( x , \xi )$ given in \eqref{a31} and $P_{\rm im}$ is the absorbing part in \eqref{a41} or \eqref{a40} depending on the situation. The point $\rho_{0} = ( 0 , 0 , \sqrt{E_{0}} , 0 ) $ belongs to $\gamma_{1}$ and $p_{\rm re} ( x , \xi ) - E_{0}$ is real principal type in a vicinity of $\rho_{0}$. Then there exists a symplectomorphism $( \widetilde{x} , \widetilde{\xi} ) = \kappa ( x , \xi )$ with $\kappa ( \rho_{0} ) = 0$ such that
\begin{equation} \label{a33}
( p_{\rm re} - E_{0} ) \circ \kappa^{- 1} ( \widetilde{x} , \widetilde{\xi} ) = \widetilde{p}_{\rm re} ( \widetilde{x} , \widetilde{\xi} ) = 2 \sqrt{E_{0}} \, \widetilde{\xi}_{1} .
\end{equation}
Moreover, since $H_{p_{\rm re} - E_{0}} ( \rho_{0} ) = ( 2 \sqrt{E_{0}} , 0 , 0 , 0 )$, the proof of the Darboux theorem (see e.g. Theorem 9.4 of Grigis and Sj\"{o}strand \cite{GrSj94_01}) shows that we can assume that $d \kappa ( \rho_{0} ) = \Id$ (this will guarantee that the Lagrangian manifolds considered in the sequel will project nicely on the $x$-space (resp. $\widetilde{x}$-space) near $\rho_{0}$ (resp. $0$)). Let $\Lambda_{+}^{1}$ (resp. $\Lambda_{-}^{2}$) denote the outgoing (resp. incoming) stable manifold from $\rho_{1}$ (resp. $\rho_{2})$. We have already seen between \eqref{a31} and \eqref{a32} that these Lagrangian manifolds belong to $p_{\rm re}^{- 1} ( E_{0} )$, intersect (transversally) along $\gamma_{1}$ and project nicely on the $x$-space near $\rho_{0}$. Then, the manifolds
\begin{equation*}
\widetilde{\Lambda}_{+}^{1} = \kappa ( \Lambda_{+}^{1} ) \qquad \text{ and } \qquad \widetilde{\Lambda}_{-}^{2} = \kappa ( \Lambda_{-}^{2} ) ,
\end{equation*}
are Lagrangian manifolds in $\widetilde{p}_{\rm re}^{- 1} ( 0 )$ which intersect (transversally) along the curve $\widetilde{\gamma}_{1} = \{ ( 2 \sqrt{E_{0}} t , 0 , 0 , 0 ) ; \ t \in \R \}$ and project nicely on the $\widetilde{x}$-space near $0$. Let $\widetilde{\varphi}_{+}^{1}$ and $\widetilde{\varphi}_{-}^{2}$ be their generating functions vanishing at $0$ (and then along $\pi_{\widetilde{x}} \widetilde{\gamma}_{1}$). Since these manifolds belong to $\widetilde{p}_{\rm re}^{- 1} ( 0 )$, the functions $\widetilde{\varphi}_{+}^{1}$ and $\widetilde{\varphi}_{-}^{2}$ depend on $\widetilde{x}_{2}$ only. Let $\chi ( \widetilde{x} ) = \chi_{1} ( \widetilde{x}_{1} / \sqrt{\nu} ) \chi_{2} ( \widetilde{x}_{2} / \nu ) \in C^{\infty} ( \R^{2} ; [ 0 , 1 ] )$ where $\nu > 0$ small and $\chi_{\bullet} \in C^{\infty} ( \R ; [ 0 , 1 ] )$ as in Figure \ref{f5}. In particular, $\chi_{2}^{- 1} ( 1 ) = [ - 1 / 2 , 1 / 2 ]$. We then define
\begin{equation} \label{a42}
\widetilde{\varphi} = \widetilde{\varphi}_{+}^{1} + ( \widetilde{\varphi}_{-}^{2} - \widetilde{\varphi}_{+}^{1} ) \chi  \qquad \text{ and } \qquad \widetilde{\Lambda} = \{ ( \widetilde{x} , \nabla \widetilde{\varphi} ( \widetilde{x} ) ) ; \ \widetilde{x} \in \R^{2} \} ,
\end{equation}
near $0$. Note that $0 \in \widetilde{\gamma}_{1} \subset \widetilde{\Lambda}$. By construction, $\widetilde{\varphi} ( \widetilde{x} ) = \widetilde{\varphi}_{+}^{1} ( \widetilde{x} )$ for $\widetilde{x}_{1} \leq 0$ or $\vert \widetilde{x}_{2} \vert \geq \nu$. Moreover, $\widetilde{\varphi} ( \widetilde{x} )$ depends only on $\widetilde{x}_{2}$ for $\widetilde{x}_{1} \geq \sqrt{\nu}$. Thus,
\begin{equation*}
\widetilde{p}_{\rm re} ( \widetilde{x} , \nabla \widetilde{\varphi} ( \widetilde{x} ) ) = 2 \sqrt{E_{0}} \nu^{- 1 / 2} ( \widetilde{\varphi}_{-}^{2} - \widetilde{\varphi}_{+}^{1} ) ( \widetilde{x} ) \chi_{1}^{\prime} ( \widetilde{x}_{1} / \sqrt{\nu} ) \chi_{2} ( \widetilde{x}_{2} / \nu ) ,
\end{equation*}
and
\begin{equation} \label{a34}
\supp \widetilde{p}_{\rm re} ( \widetilde{x} , \nabla \widetilde{\varphi} ( \widetilde{x} ) ) \subset [ 0 , \sqrt{\nu} ] \times [ - \nu , \nu ] .
\end{equation}
Moreover, since $\widetilde{\Lambda}_{+}^{1}$ and $\widetilde{\Lambda}_{-}^{2} $ intersect (transversally) along $\widetilde{\gamma}_{1}$, we have $\widetilde{\varphi}_{-}^{2} ( \widetilde{x} ) - \widetilde{\varphi}_{+}^{1} ( \widetilde{x} ) = \CO ( \widetilde{x}_{2}^{2} )$. It implies that
\begin{equation} \label{a38}
\partial_{\widetilde{x}}^{\alpha} \widetilde{p}_{\rm re} ( \widetilde{x} , \nabla \widetilde{\varphi} ( \widetilde{x} ) ) = \CO ( \nu^{3 / 2 - \vert \alpha \vert} ) ,
\end{equation}
for all $\alpha \in \N^{2}$. Let $x ( \widetilde{x} ) = \pi_{x} \kappa^{- 1} ( \widetilde{x} , \nabla \widetilde{\varphi} ( \widetilde{x} ) )$. Since $d \kappa^{- 1} ( 0 ) = \Id$, $\nabla \widetilde{\varphi} ( 0 ) = 0$ and $\Hess \widetilde{\varphi} = \CO ( 1 )$, $x ( \widetilde{x} )$ is a local diffeomorphism from a neighborhood of $0$ independent of $\nu$ of inverse function $\widetilde{x} ( x )$ defined on a neighborhood of $0$ independent of $\nu$. Furthermore, using that $\partial_{\widetilde{x}}^{\alpha} \widetilde{\varphi} = \CO ( \nu^{( 2 - \vert \alpha \vert )_{-}} )$ for all $\alpha \in \N^{2}$, we deduce
\begin{equation} \label{a36}
\partial_{x}^{\alpha} \widetilde{x} ( x ) = \CO ( \nu^{( 1 - \vert \alpha \vert )_{-}} ) ,
\end{equation}
for all $\alpha \in \N^{2}$. We set $\Lambda = \kappa^{- 1} ( \widetilde{\Lambda} )$. From the previous discussion, this Lagrangian manifold projects nicely on the $x$-space near $\rho_{0}$ and then $\Lambda = \{ ( x , \nabla \varphi ( x ) ) ; \ x \in \R^{2} \}$ near $\rho_{0}$ for some $\varphi \in C^{\infty} ( \R^{2} )$. The potential $W$ is defined by
\begin{equation} \label{a35}
W ( x ) = E_{0} - p_{\rm re} ( x , \nabla \varphi ( x ) ) .
\end{equation}
From \eqref{a33} and \eqref{a34}, $W ( x ) = - \widetilde{p}_{\rm re} ( \widetilde{x} ( x ) , \nabla \widetilde{\varphi} ( \widetilde{x} ( x ) ) ) \in C^{\infty}_{0} ( \R^{2} )$ is supported in a small neighborhood of $0$ (for $\nu > 0$ small). Moreover, combining \eqref{a38} and \eqref{a36}, we get
\begin{equation} \label{a37}
\partial_{x}^{\alpha} W ( x ) = \CO ( \nu^{3 / 2 - \vert \alpha \vert} ) ,
\end{equation}
for all $\alpha \in \N^{2}$.

Eventually, we set
\begin{equation}
\widehat{P} = \Op ( \widehat{p}_{\rm re} ) - i P_{\rm im} \qquad \text{ with } \qquad \widehat{p}_{\rm re} ( x , \xi ) = p_{\rm re} ( x , \xi ) + W ( x ) .
\end{equation}
Let us now describe the trapped set $K_{\widehat{P}} ( E_{0} )$. We first get an a priori estimate on the Hamiltonian flows.  Let $\rho ( t )$ (resp. $\widehat{\rho} ( t )$) be the Hamiltonian curve of $H_{p_{\rm re}}$ (resp. $H_{\widehat{p}_{\rm re}} = H_{p_{\rm re}} + H_{W}$) starting from $\rho ( 0 ) = \widehat{\rho} ( 0 )$ and denote $\gamma ( t ) = \rho ( t ) - \widehat{\rho} ( t )$. Since $\partial_{x} W = \CO ( \sqrt{\nu} )$ from \eqref{a37} and $\Hess p_{\rm re}$ is bounded, the Hamiltonian equations give
\begin{equation*}
\vert \dot{\gamma} \vert \leq C \vert \gamma \vert + C \sqrt{\nu} ,
\end{equation*}
for some $C > 0$. Then, the Gr\"{o}nwall lemma implies
\begin{equation} \label{a39}
\vert \rho ( t ) - \widehat{\rho} ( t ) \vert \leq \sqrt{\nu} e^{2 C t } ,
\end{equation}
for all $t \geq 0$. Using that  $p_{\rm re} = \widehat{p}_{\rm re}$ near $\rho_{1} , \rho_{2}$ since $W$ is localized only near $0$, using \eqref{a39} which shows than $W$ (and $\varepsilon ( \xi_{1} - \sqrt{E_{0}} ) x_{2} \psi ( x )$ for \eqref{a40}) hardly deviates the Hamiltonian trajectories and using Proposition B.12 of \cite{BoFuRaZe18_01} which guarantees than a trapped trajectory in $\widehat{p}_{\rm re}^{- 1} ( E_{0} )$ which touches the support of $V ( x_{1} \pm L , x_{2} )$ must be in its incoming/outgoing stable manifold, one can show that $K_{\widehat{P}} ( E_{0} )$ consists of the hyperbolic points $\rho_{0} , \rho_{1}$ and heteroclinic trajectories between them. Moreover, these heteroclinic trajectories are close to those for $p_{\rm re}$. In particular, those that go from $\rho_{2}$ to $\rho_{1}$ still pass through the absorbing potential and are remove from the trapped set of $\widehat{P}$ thanks to $- i P_{\rm im}$. Summing up,
\begin{equation*}
K_{\widehat{P}} ( E_{0} ) = \{ \rho_{1} , \rho_{2} \} \cup \CH ,
\end{equation*}
where $\CH$ is the union of the heteroclinic trajectories for $H_{\widehat{p}_{\rm re}}$ from $\rho_{1}$ to $\rho_{2}$. Let us now describe the set $\CH$. For that, let $\widehat{\Lambda}_{+}^{1}$ (resp. $\widehat{\Lambda}_{-}^{2}$) denote the outgoing (resp. incoming) stable manifold for $\widehat{p}_{\rm re}$ from $\rho_{1}$ (resp. $\rho_{2})$. On the left (with respect to $x_{1}$) of the support of $W$, we have $\widehat{\Lambda}_{+}^{1} = \Lambda_{+} ^{1} = \Lambda$ and on the right $\widehat{\Lambda}_{-}^{2} = \Lambda_{-}^{2}$. On the other hand, \eqref{a35} gives $\Lambda \subset \widehat{p}_{\rm re}^{- 1} ( E_{0} )$. Then, the Lagrangian manifold $\Lambda$ is union of integral curves of $H_{\widehat{p}_{\rm re}}$ (see e.g. the discussion above Proposition 1.4 of Dimassi and Sj\"{o}strand \cite{DiSj99_01}). Since this is the same for $\widehat{\Lambda}_{+}^{1}$, we get that $\widehat{\Lambda}_{+}^{1} = \Lambda$ on the right of the support of $W$. Summing up,
\begin{equation*}
\CH = \Lambda \cap \Lambda_{-}^{2} ,
\end{equation*}
on the right of the support of $W$. Applying the symplectomorphism $\kappa$ and noting $\widetilde{\CH} = \kappa ( \CH )$, the previous relation becomes
\begin{equation*}
\widetilde{\CH} = \widetilde{\Lambda} \cap \widetilde{\Lambda}_{-}^{2} = \big\{ ( \widetilde{x}_ {1} , \widetilde{x}_ {2} , 0 , \partial_{\widetilde{x}_{2}} \widetilde{\varphi} ( \widetilde{x}_ {2} ) ) ; \ \partial_{\widetilde{x}_{2}} \widetilde{\varphi} ( \widetilde{x}_ {2} ) = \partial_{\widetilde{x}_{2}} \widetilde{\varphi}_{-}^{2} ( \widetilde{x}_{2} ) \big\} ,
\end{equation*}
near $0$ for $\widetilde{x}_{1} \geq \sqrt{\nu}$. Since the intersection between $\widetilde{\Lambda}_{+}^{1}$ and $\widetilde{\Lambda}_{-}^{2}$ is transversal along $\widetilde{\gamma}_{1}$ and $\widetilde{\varphi}_{+}^{1} ( 0 ) = \widetilde{\varphi}_{-}^{2} ( 0 )$, we have $\widetilde{\varphi}_{-}^{2} ( \widetilde{x}_ {2} ) - \widetilde{\varphi}_{+}^{1} ( \widetilde{x}_ {2} ) \sim \alpha \widetilde{x}_ {2}^{2}$ and $\partial_{\widetilde{x}_{2}}  \widetilde{\varphi}_{-}^{2} ( \widetilde{x}_ {2} ) - \partial_{\widetilde{x}_{2}} \widetilde{\varphi}_{+}^{1} ( \widetilde{x}_ {2} ) \sim 2 \alpha \widetilde{x}_ {2}$ at the point $\widetilde{x}_ {2} = 0$ for some $\alpha \neq 0$. We assume in the sequel that $\alpha > 0$, the other case being similar. A direct computation and \eqref{a42} give
\begin{equation} \label{a43}
\partial_{\widetilde{x}_{2}} \widetilde{\varphi} - \partial_{\widetilde{x}_{2}} \widetilde{\varphi}_{-}^{2} = \big( \partial_{\widetilde{x}_{2}} \widetilde{\varphi}_{-}^{2} - \partial_{\widetilde{x}_{2}} \widetilde{\varphi}_{+}^{1} \big) ( \chi_{2} ( \widetilde{x}_{2} / \nu ) - 1 ) + ( \widetilde{\varphi}_{-}^{2} - \widetilde{\varphi}_{+}^{1} ) \partial_{\widetilde{x}_{2}} ( \chi_{2} ( \widetilde{x}_{2} / \nu ) ) .
\end{equation}
Since $\alpha > 0$ and $\chi_{2}$ is as in Figure \ref{f5}, the right hand side of \eqref{a43} is non-positive (resp. non-negative) for $\widetilde{x}_{2} \geq 0$ (resp. $\widetilde{x}_{2} \leq 0$). Moreover, this term vanishes iff $\vert \widetilde{x}_{2} \vert\leq 1 / 2$. Thus,
\begin{equation} \label{a44}
\widetilde{\CH} = \big\{ ( \widetilde{x}_ {1} , \widetilde{x}_ {2} , 0 , \partial_{\widetilde{x}_{2}} \widetilde{\varphi} ( \widetilde{x}_ {2} ) ) ; \ \widetilde{x}_{2} \in [ - 1 / 2 , 1 / 2 ] \big\} ,
\end{equation}
near $0$ for $\widetilde{x}_{1} \geq \sqrt{\nu}$. Coming back to the original variables, we deduce that $\CH$ has  Hausdorff, Minkowski and packing dimension $2$.

\subsubsection{Heteroclinic set of arbitrary dimension}

Up to now, we have constructed operators $P$ and $\widehat{P}$ whose trapped set at energy $E_{0}$ has dimension $1$ and $2$ respectively. It remains to explain how to change the previous construction to have a trapped set of fractal dimension $d$ for any given $d \in ] 1 , 2 [$. Modifying the Cantor construction,  one can obtain a compact set $K \subset [ - 1 / 4 , 1 / 4 ]$ of Hausdorff and Minkowski dimension $d - 1$ (see page 77 of Mattali \cite{Ma95_01}). The various fractal dimensions satisfy
\begin{equation} \label{a47}
\dim_{\text{Haus}} F \leq \underline{\dim}_{\text{pack}} F \leq \underline{\dim}_{\text{Mink}} F \quad \text{ and } \quad \underline{\dim}_{\text{pack}} F \leq \overline{\dim}_{\text{pack}} F \leq \overline{\dim}_{\text{Mink}} F ,
\end{equation}
for all $F \subset \R^{2 n}$ (see page 82 of \cite{Ma95_01}). Then, $K$ has also packing dimension $d - 1$. Up to a translation, we can assume that $0 \in K \subset [ - 1 / 2 , 1 / 2 ]$. Then, there exists a function $g \in C^{\infty}_{0} ( \R )$ such that $g^{- 1} ( 0 ) \cap [ - 1 , 1 ] = K$. We denote
\begin{equation*}
G ( t ) = \int_{0}^{t} s g^{2} ( s ) \, d s ,
\end{equation*}
$\widetilde{g}_{\bullet} ( \widetilde{x} ) = g ( \widetilde{x}_{2} / \nu )$ and $\widetilde{G} ( \widetilde{x} ) = G ( \widetilde{x}_{2} / \nu )$. We replace the phase $\widetilde{\varphi}$ of \eqref{a42} by
\begin{equation*}
\widetilde{\varphi}  = \widetilde{\varphi}_{+}^{1} + \big( \widetilde{\varphi}_{-}^{2} - \widetilde{\varphi}_{+}^{1} - e^{- 1 / \nu} \widetilde{G} \big) \chi ,
\end{equation*}
and construct $\widetilde{\Lambda} , \Lambda , W , \widehat{P} , \ldots$ as in the previous paragraph. The factor $e^{- 1 / \nu}$ and $G \in C^{\infty} ( \R )$ guarantee that the same results hold for $\nu > 0$ small enough excepted the formula \eqref{a44} for the heteroclinic set. Indeed, \eqref{a43} is replaced by
\begin{align}
\partial_{\widetilde{x}_{2}} \widetilde{\varphi} - \partial_{\widetilde{x}_{2}} \widetilde{\varphi}_{-}^{2} ={}& \big( \partial_{\widetilde{x}_{2}} \widetilde{\varphi}_{-}^{2} - \partial_{\widetilde{x}_{2}} \widetilde{\varphi}_{+}^{1} \big) ( \chi_{2} ( \widetilde{x}_{2} / \nu ) - 1 ) - e^{- 1 / \nu} \nu^{- 2} \widetilde{x}_{2} \widetilde{g}^{2} \chi_{2} ( \widetilde{x}_{2} / \nu ) \nonumber \\
&+ \big( \widetilde{\varphi}_{-}^{2} - \widetilde{\varphi}_{+}^{1} - e^{- 1 / \nu} \widetilde{G} \big) \partial_{\widetilde{x}_{2}} ( \chi_{2} ( \widetilde{x}_{2} / \nu ) ) ,  \label{a45}
\end{align}
in the present setting. We have $\vert e^{- 1 / \nu} \widetilde{G} \vert \leq \vert \widetilde{\varphi}_{-}^{2} - \widetilde{\varphi}_{+}^{1} \vert / 2$ on the support of $\partial_{\widetilde{x}_{2}} \chi_{2} ( \widetilde{x}_{2} / \nu )$. Since $\alpha > 0$, the three terms in the right hand side of \eqref{a45} are non-positive (resp. non-negative) for $\widetilde{x}_{2} \geq 0$ (resp. $\widetilde{x}_{2} \leq 0$). Moreover, their sum vanishes iff $\widetilde{x}_{2} \in [ - 1 / 2 , 1 / 2 ]$ and $\widetilde{x}_{2} \widetilde{g}^{2} ( \widetilde{x}_{2} ) = 0$. It means
\begin{equation*}
\widetilde{\CH} = \big\{ ( \widetilde{x}_ {1} , \widetilde{x}_ {2} , 0 , \partial_{\widetilde{x}_{2}} \widetilde{\varphi} ( \widetilde{x}_ {2} ) ) ; \ \widetilde{x}_{2} \in \nu K \big\} ,
\end{equation*}
near $0$ for $\widetilde{x}_{1} \geq \sqrt{\nu}$ and $\widetilde{\CH}$ has dimension $d$. We recall that the Hausdorff and Minkowski dimensions are invariant by diffeormophism (see Propositions 2.5 and 3.3 of Falconer \cite{Fa14_01}). Combinning with \eqref{a47}, this implies that $\CH$ has also Hausdorff, Minkowski and packing dimension $d$. Indeed, this is clear by invariance outside neighborhoods of $\rho_{1}$ and $\rho_{2}$. Since $V$ is radial, the trapped set $K_{\widehat{P}} ( E_{0} )$ near $\rho_{1}$ takes the form
\begin{equation*}
\big\{ \big( r \theta , \sqrt{ E_{0} - V ( r )} \theta \big) ; \ r \in [ 0 , 1 ] \text{ and } \theta \in K_{1} \big\} ,
\end{equation*}
where $x = x^{1} + r \theta$ are the spherical coordinates centered at $x^{1}$ and $K_{1}$ is a compact set of dimension $d - 1$. This set has dimension $d$ and the same argument holds near $\rho_{2}$. This construction provides an example satisfying all the assumptions of Theorem \ref{a1}.

\Subsection{A differential operator of order $4$} \label{s2}

\begin{figure}
\begin{center}
\begin{picture}(0,0)%
\includegraphics{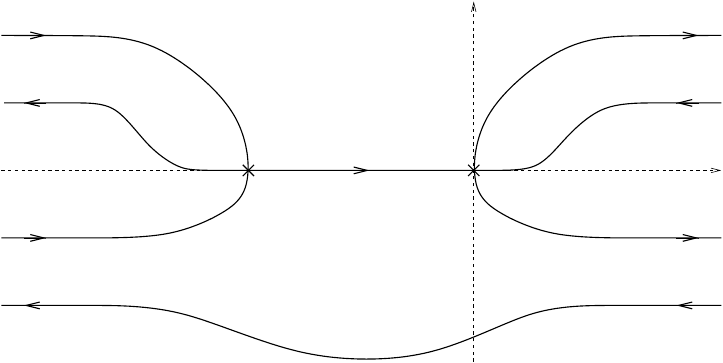}%
\end{picture}%
\setlength{\unitlength}{1184sp}%
\begingroup\makeatletter\ifx\SetFigFont\undefined%
\gdef\SetFigFont#1#2#3#4#5{%
  \reset@font\fontsize{#1}{#2pt}%
  \fontfamily{#3}\fontseries{#4}\fontshape{#5}%
  \selectfont}%
\fi\endgroup%
\begin{picture}(19266,9644)(-2432,-10283)
\put(6901,-4711){\makebox(0,0)[lb]{\smash{{\SetFigFont{9}{10.8}{\rmdefault}{\mddefault}{\updefault}$\CH^{0}$}}}}
\put(16426,-4711){\makebox(0,0)[lb]{\smash{{\SetFigFont{9}{10.8}{\rmdefault}{\mddefault}{\updefault}$x$}}}}
\put(9826,-5761){\makebox(0,0)[rb]{\smash{{\SetFigFont{9}{10.8}{\rmdefault}{\mddefault}{\updefault}$\rho_{2}^{0}$}}}}
\put(4426,-5761){\makebox(0,0)[lb]{\smash{{\SetFigFont{9}{10.8}{\rmdefault}{\mddefault}{\updefault}$\rho_{1}^{0}$}}}}
\put(10501,-1111){\makebox(0,0)[lb]{\smash{{\SetFigFont{9}{10.8}{\rmdefault}{\mddefault}{\updefault}$\xi$}}}}
\end{picture}%
\end{center}
\caption{The energy surface $q^{- 1} ( 0 ) \subset \R^{2}$.} \label{f1}
\end{figure}

Since it is not possible to construct an example with a selfadjoint differential operator of order $2$, we exhibit in this part an example of selfadjoint differential operator of order $4$ in $\R^{2}$ satisfying the geometric structure \eqref{a17}. We start with a $1$-dimensional model:
\begin{equation*}
q ( x , \xi ) = \big( \xi^{2} - k ( x ) \big) \big( \xi^{2} - 4 k ( x ) \big) + 8 f ( x ) \xi^{3} - \frac{3}{2} x \Big( \frac{1}{4} + x \Big) f ( x ) \xi ,
\end{equation*}
where $f , 1 - k \in C^{\infty}_{0} ( \R ; [ 0 , 1])$ with $\one_{[ - 1 / 2 ,1/ 2 ]} \prec f \prec \one_{[ - 1 ,1 ]}$ and $\one_{[ - 1/2 ,1/ 2 ]} \prec 1 - k \prec \one_{[ - 3 , 3 ]}$. For $f , k$ well chosen, the energy surface $q^{- 1} ( 0 )$ is as in Figure \ref{f1}. In particular, the trapped set at energy $0$ is reduced to
\begin{equation} \label{a20}
K_{q} ( 0 ) = \{ \rho_{1}^{0} , \rho_{2}^{0} \} \cup \CH^{0},
\end{equation}
where $\rho_{1}^{0} = ( - 1 / 4 , 0 )$ and $\rho_{2}^{0} = ( 0 , 0 )$ are hyperbolic fixed points and $\CH^{0} = ] - 1 / 4 , 0 [ \times \{ 0 \}$ is a heteroclinic trajectory from $\rho_{1}^{0}$ to $\rho_{2}^{0}$. Note also that this symbol of order $4$ is elliptic and that $q ( x , \xi ) = ( \xi^{2} - 1 ) ( \xi^{2} - 4 ) + r ( x , \xi )$ where $r ( x , \xi )$ is compactly supported in $x$ and of order $3$ in $\xi$. Then, $q$ is a symbol satisfying \eqref{a17} in dimension $1$. To construct a setting with a heteroclinic set of higher dimension, we extend this example in $\R^{2}$ by adding a repulsive symbol in the other variable $y$. We set
\begin{equation} \label{a25}
p ( x , y , \xi , \eta ) = ( \xi^{2} - 1 ) ( \xi^{2} - 4 ) + \eta^{4} + \eta^{2} + r ( x , \xi ) \chi ( y ) + \lambda \varphi ( y ) ,
\end{equation}
with $\one_{\{ 0 \}} \prec \chi \in C^{\infty}_{0} ( \R ; [ 0 , 1 ] )$, $\varphi ( y ) = e^{- y^{2}} - 1$ and $\lambda > 1$. The term $\eta^{2}$ guarantees that the fixed points are hyperbolic and we add $\eta^{4}$ to define the resonances by the method of Helffer and Sj\"{o}strand \cite{HeSj86_01}.

\begin{lemma}\sl
For $\lambda$ large enough, the trapped set for $p$ at energy $0$ is given by
\begin{equation*}
K_{p} ( 0 ) = \{ \rho_{1} , \rho_{2} \} \cup \CH ,
\end{equation*}
where $\rho_{1} = ( - 1 / 4 , 0 , 0 , 0 )$ and $\rho_{2} = ( 0 , 0 , 0 , 0 )$ are hyperbolic fixed points and $\CH = ] - 1 / 4 , 0 [ \times \{ ( 0, 0 , 0 ) \}$ is a heteroclinic trajectory from $\rho_{1}$ to $\rho_{2}$.
\end{lemma}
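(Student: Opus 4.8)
The plan is to reduce this four–dimensional problem to the one–dimensional model \eqref{a20} by a decoupling argument, after first confining every trapped trajectory to the slab where $\chi(y)=1$. Fix $c_{0}>0$ with $\chi\equiv1$ on $[-c_{0},c_{0}]$ (possible since $\one_{\{0\}}\prec\chi$) and $c_{2}>0$ with $\supp\chi\subset[-c_{2},c_{2}]$, so that $\chi'$ is supported in the annulus $\{c_{0}\le|y|\le c_{2}\}$.

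\textbf{Confinement.} I would use $g=y\eta$ as a Lyapunov function along $H_{p}$. A direct computation gives
\[
H_{p}g=(4\eta^{3}+2\eta)\eta-y\big(r(x,\xi)\chi'(y)+\lambda\varphi'(y)\big)=4\eta^{4}+2\eta^{2}-y\,r(x,\xi)\chi'(y)+2\lambda y^{2}e^{-y^{2}} .
\]
On $\{|y|<c_{0}\}\cup\{|y|>c_{2}\}$ the cross term vanishes and $H_{p}g\ge0$ trivially. On the annulus one first notes that on $p^{-1}(0)$ one has $(\xi^{2}-1)(\xi^{2}-4)+r(x,\xi)\chi(y)=-\eta^{4}-\eta^{2}-\lambda\varphi(y)\le\lambda$, and since $r=\CO(\langle\xi\rangle^{3})$ uniformly in $x$ with a constant independent of $\lambda$, this forces $|\xi|\lesssim\lambda^{1/4}$, hence $|r(x,\xi)|\lesssim\lambda^{3/4}$, on $p^{-1}(0)$; as $2\lambda y^{2}e^{-y^{2}}\gtrsim\lambda$ on the annulus, we get $H_{p}g\gtrsim\lambda-C\lambda^{3/4}\ge0$ there for $\lambda$ large. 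Altogether $H_{p}g\ge0$ on $p^{-1}(0)$, with equality exactly on $\{(y,\eta)=(0,0)\}$. Given a trapped trajectory at energy $0$, it is bounded, so $g$ is non-decreasing and bounded along it and converges at $\pm\infty$; its $\alpha$– and $\omega$–limit sets are non-empty, flow–invariant and contained in $\{H_{p}g=0\}=\{(y,\eta)=(0,0)\}$, on which $g=0$, so $g\equiv0$ along the trajectory and therefore $(y,\eta)\equiv(0,0)$.

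\textbf{Decoupling.} The slice $\{(y,\eta)=(0,0)\}$ is invariant for $H_{p}$ (there $\dot y=4\eta^{3}+2\eta=0$ and $\dot\eta=-r\chi'(0)-\lambda\varphi'(0)=0$ since $\chi'(0)=\varphi'(0)=0$), and since $\chi(0)=1$, $\varphi(0)=0$ one has $p(x,0,\xi,0)=q(x,\xi)$, so $H_{p}$ restricts there to $H_{q}$. Hence a trapped $H_{p}$–trajectory has $(y,\eta)\equiv(0,0)$ and its $(x,\xi)$–component is a bounded $H_{q}$–trajectory in $q^{-1}(0)$, i.e. lies in $K_{q}(0)=\{\rho_{1}^{0},\rho_{2}^{0}\}\cup\CH^{0}$ by \eqref{a20}; this gives $K_{p}(0)\subset\{\rho_{1},\rho_{2}\}\cup\CH$. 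Conversely $\rho_{1}=(-1/4,0,0,0)$ and $\rho_{2}=(0,0,0,0)$ are fixed points of $H_{p}$ and $\CH=\CH^{0}\times\{(0,0)\}=\,]-1/4,0[\times\{(0,0,0)\}$ is a bounded heteroclinic trajectory from $\rho_{1}$ to $\rho_{2}$, giving the reverse inclusion. Finally, near $\rho_{j}$ one has $p=q(x,\xi)+\eta^{2}-\lambda y^{2}+\CO(4)$, so the linearization of $H_{p}$ at $\rho_{j}$ is block diagonal with the hyperbolic $H_{q}$–block in $(x,\xi)$ and a hyperbolic saddle block in $(y,\eta)$; hence $\rho_{1},\rho_{2}$ are hyperbolic fixed points of $H_{p}$.

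\textbf{Main obstacle.} The delicate point is the confinement step, precisely the domination of the cross term $y\,r(x,\xi)\chi'(y)$ by the repulsive term $2\lambda y^{2}e^{-y^{2}}$ on the transition annulus: one must extract from the ellipticity of $p$ at energy $0$ the bound $|\xi|\lesssim\lambda^{1/4}$ on $p^{-1}(0)$, so that $r$ grows only like $\lambda^{3/4}$, strictly slower than the $\lambda$ gain from the $y$–term — this is exactly where the hypothesis ``$\lambda$ large'' enters (for small $\lambda$ the transition region could support spurious trapped trajectories). Once this estimate is in place the LaSalle–type argument, the invariance of the slice $\{(y,\eta)=(0,0)\}$, and the block–diagonal hyperbolicity computation are all routine.
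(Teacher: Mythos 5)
Your proof is correct and reaches the same reduction as the paper's, but via a different Lyapunov device. The paper's proof tracks the second derivative of $t\mapsto y^{2}(t)$ along a trapped trajectory: one computes $\ddot{\wideparen{y^{2}}} = 2(4\eta^{3}+2\eta)^{2} + 2(12\eta^{2}+2)\bigl(2\lambda y^{2}e^{-y^{2}} - r(x,\xi)\,y\,\chi'(y)\bigr)$, dominates the cross term by the $\lambda y^{2}e^{-y^{2}}$ gain exactly as you do (via $|\xi|\lesssim\lambda^{1/4}$ on $p^{-1}(0)$, hence $|r|\lesssim\lambda^{3/4}$), and concludes that $y^{2}$ is convex along the flow; a bounded convex function on $\R$ is constant, whence $\ddot{\wideparen{y^{2}}}\equiv 0$ and then $y\equiv\eta\equiv 0$. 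You instead use $g=y\eta$, show $H_{p}g\geq 0$ on $p^{-1}(0)$ with equality exactly on $\{(y,\eta)=(0,0)\}$, and then invoke a LaSalle invariance argument (limit sets of a bounded trajectory lie in the set where the Lyapunov derivative vanishes) to force $(y,\eta)\equiv(0,0)$. The key analytic estimate — the domination of $y\,r\,\chi'(y)$ by the repulsive $2\lambda y^{2}e^{-y^{2}}$ term on the transition annulus, for $\lambda$ large — is identical in both arguments, and that is where the hypothesis ``$\lambda$ large'' enters. The decoupling step (invariance of the slice $\{(y,\eta)=(0,0)\}$, reduction to $K_{q}(0)$) and the verification of hyperbolicity at $\rho_{1},\rho_{2}$ also coincide with the paper. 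The paper's convexity-plus-boundedness argument is marginally more self-contained, as it sidesteps $\alpha$- and $\omega$-limit-set considerations, but your route is equally valid; you might just add a sentence justifying why the limit sets lie in $\{H_{p}g=0\}$ (the usual LaSalle argument: $g$ is constant on orbits of limit points, so its flow derivative vanishes there), since you state it without proof.
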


Note that, since $p$ is globally elliptic, the Hamiltonian trajectories are complete, i.e. they are defined for all $t \in \R$.

\begin{proof}
Let $\rho ( t ) = ( x , y , \xi , \eta ) ( t )$ be a trapped Hamiltonian trajectory of $H_{p}$ with $p ( \rho ) = 0$. Then,
\begin{equation*}
\left\{ \begin{aligned}
\dot{y} &= 4 \eta^{3} + 2 \eta , \\
\dot{\eta} &= - r ( x , \xi ) \chi^{\prime} ( y ) + 2 \lambda y e^{- y^{2}} .
\end{aligned} \right.
\end{equation*}
In particular, we have $\dot{\wideparen{y^{2}}} = 2 ( 4 \eta^{3} + 2 \eta ) y$ and
\begin{equation} \label{a18}
\ddot{\wideparen{y^{2}}} = 2 ( 4 \eta^{3} + 2 \eta )^{2} + 2 ( 12 \eta^{2} + 2 ) \big( 2 \lambda y^{2} e^{- y^{2}} - r ( x , \xi ) y \chi^{\prime} ( y ) \big) .
\end{equation}
Assume $\supp \chi^{\prime} \subset [ - C , - 1 / C ] \cup [ 1 / C , C ]$ for some $C > 1$. On $p^{- 1} ( 0 )$, we have
\begin{equation*}
\xi^{4} + \eta^{4} -\lambda \lesssim \< \xi \>^{3} + \eta^{2} ,
\end{equation*}
and then $\vert \xi \vert \lesssim \lambda^{1/4}$ and $\vert \eta \vert \lesssim \lambda^{1/4}$. Thus,
\begin{equation*}
\lambda y^{2} e^{- y^{2}} - r ( x , \xi ) y \chi^{\prime} ( y ) \geq \lambda y^{2} e^{- y^{2}} - M \lambda^{3 / 4} \one_{ [ - C , - 1 / C ] \cup [ 1 / C , C ]} ( y ) \geq 0 ,
\end{equation*}
for some $M > 0$ and $\lambda$ large enough. As a consequence, \eqref{a18} gives
\begin{equation} \label{a19}
\ddot{\wideparen{y^{2}}} \geq 2 ( 4 \eta^{3} + 2 \eta )^{2} + 2 ( 12 \eta^{2} + 2 ) \lambda y^{2} e^{- y^{2}} \geq 0 ,
\end{equation}
which shows that $t \mapsto y^{2} ( t )$ is convex. Since this function is also bounded (because $\rho ( t )$ is trapped), we deduce that $y^{2} ( t )$ is constant. Therefore, \eqref{a19} implies $y ( t ) = \eta ( t ) = 0$ for all $t$. Since $p ( x, 0 , \xi , 0 ) = q ( x , \xi )$, $( x ( t ) , \xi ( t ) )$ is a trapped trajectory of $H_{q}$ of energy $0$ and the formula for $K_{p} ( 0 )$ follows from \eqref{a20}.

On the other hand,
\begin{equation*}
p ( x , y , \xi , \eta ) = \left\{ \begin{aligned}
&\frac{3}{8} \Big( x + \frac{1}{4} \Big) \xi + \eta^{2} - \lambda y^{2} + \CO \big( ( x , y , \xi , \eta )^{3} \big) &&\text{ near } \rho_{1} ,  \\
&- \frac{3}{8} x \xi + \eta^{2} - \lambda y^{2} + \CO \big( ( x , y , \xi , \eta )^{3} \big) &&\text{ near } \rho_{2} ,
\end{aligned} \right.
\end{equation*}
proving that $\rho_{1} , \rho_{2}$ are hyperbolic fixed points.
\end{proof}

At this point, we have constructed a symbol satisfying \eqref{a17} where $\CH$ consists of a single curve. To have a heteroclinic set $\CH$ of higher dimension, it is enough to modify slightly $p$ near $ \{ ( - 1 / 8 , 0, 0 , 0 ) \}$ as in the end of Section \ref{s3}. Then, the operator $\Op ( p )$ is a selfadjoint differential operator of order $4$ satisfying the geometric assumption of \eqref{a17}.

It remains to define the resonances for $\Op ( p )$ which does not have the form \eqref{a15}, and it seems difficult to use the classical analytic dilation/distortion method (see the discussion above Lemma \ref{a21}). Instead, we will follow the resonance theory of Helffer and Sj\"{o}strand \cite{HeSj86_01}, based on microlocal arguments. Using their notations, we take $r ( x , y ) = 1$, $R ( x, y ) = \< ( x , y ) \>$, $m_{0} ( x , y ) = 1$ and $N_{0} = 4$. Note that the function $( x , y ) \longmapsto e^{- y^{2}} -1$ is not bounded in the complex region \cite[(8.1)]{HeSj86_01}, but since it extends as a bounded holomorphic function in $\{ \vert \im y \vert \leq C \< \re y \> \}$, $C > 0$, the arguments of \cite{HeSj86_01} are still valid. It remains to construct an escape function at infinity. We note that $( \xi^{2} - 1 ) ( \xi^{2} - 4 ) = 0$ iff $\xi = \pm 1$ or $\xi = \pm 2$. The same way, $( \xi^{2} - 1 ) ( \xi^{2} - 4 ) - \lambda= 0$ iff
\begin{equation*}
\xi = \pm \xi_{\lambda} = \pm \sqrt{\frac{5 + \sqrt{9 + 4 \lambda}}{2}}
\end{equation*}
for $\lambda$ large enough. We consider $\chi_{\zeta} \in C^{\infty}_{0} ( \R ; [ 0 , 1] )$, with $\zeta = 1 , 2 , \xi_{\lambda}$, supported sufficiently close to $\{ - \zeta , \zeta \}$ and equal to $1$ near $\{ - \zeta , \zeta \}$. We then set
\begin{equation*}
G ( x , y , \xi , \eta ) = x \xi \big( \chi_{\xi_{\lambda}} ( \xi ) + \chi_{2} ( \xi ) - \chi_{1} ( \xi ) \big) + y \eta .
\end{equation*}
The choice of this escape function is justified as follows. It is natural to consider $G_{0} = x \xi + y \eta$, but the sign in front of $x \xi$ must be changed between $\xi = \pm 1$ and $\xi = \pm 2$ because the direction of propagation of the Hamiltonian curves at infinity changes between $\xi = \pm 1$ and $\xi = \pm 2$ (see Figure \ref{f1}). This explains why we can not use the standard analytic dilation/distortion method (and take $G_{0} = x \xi + y \eta$) to define the resonances. We have $G \in \dot{S}^{1,1} ( \R^{4} )$ and the lemma below shows that $G$ is an escape function at infinity. Then, $\Op ( p )$ satisfies the general assumptions of \cite[Chapter 8]{HeSj86_01} and the resonances can be defined as in this paper.

\begin{lemma}\sl \label{a21}
There exists a compact set $K \subset \R^{4}$ such that $H_{p} G \gtrsim 1$ on $p^{- 1} ( 0 ) \setminus K$.
\end{lemma}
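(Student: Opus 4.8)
The plan is to reduce the inequality to the region of phase space where the $x$--compactly supported perturbation $r(x,\xi)\chi(y)$ is switched off, and there to compute $H_{p}G$ by hand. Throughout write $\sigma(\xi)=\chi_{\xi_{\lambda}}(\xi)+\chi_{2}(\xi)-\chi_{1}(\xi)$, so that $G=x\xi\,\sigma(\xi)+y\eta$. The first observation is that the momenta are bounded on the energy surface: since $\vert r(x,\xi)\vert\lesssim\langle\xi\rangle^{3}$, $\vert\chi(y)\vert\le 1$ and $\vert\varphi(y)\vert\le 1$, the relation $(\xi^{2}-1)(\xi^{2}-4)+\eta^{4}+\eta^{2}=-r(x,\xi)\chi(y)-\lambda\varphi(y)$ gives $\xi^{4}+\eta^{4}-\lambda\lesssim\langle\xi\rangle^{3}$ on $p^{-1}(0)$, whence $\vert\xi\vert+\vert\eta\vert\le C_{\lambda}$ there. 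Fix $R$ so large that $\{\vert x\vert>R\}$ misses the $x$--support of $r$ and $\{\vert y\vert>R\}$ misses $\supp\chi$, and put $K=\{\vert x\vert\le R,\ \vert y\vert\le R,\ \vert\xi\vert\le C_{\lambda}+1,\ \vert\eta\vert\le C_{\lambda}+1\}$. Then $p^{-1}(0)\setminus K\subset\{\vert x\vert>R\}\cup\{\vert y\vert>R\}$, where $r(x,\xi)\chi(y)$ and all its $x,y$--derivatives vanish, so $\partial_{x}p=0$, $\partial_{y}p=-2\lambda y e^{-y^{2}}$, $\partial_{\xi}p=4\xi^{3}-10\xi$, $\partial_{\eta}p=4\eta^{3}+2\eta$, and hence
\begin{equation*}
H_{p}G=(4\xi^{4}-10\xi^{2})\,\sigma(\xi)+4\eta^{4}+2\eta^{2}+2\lambda y^{2}e^{-y^{2}} .
\end{equation*}

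The second step is to check that each of the three summands is nonnegative. The last two are, trivially. For the first, write $4\xi^{4}-10\xi^{2}=2\xi^{2}(2\xi^{2}-5)$, negative for $0<\xi^{2}<5/2$ and positive for $\xi^{2}>5/2$; one arranges the (pairwise disjoint) supports of the cut-offs so that $\supp\chi_{1}\subset\{0<\xi^{2}<5/2\}$ and $\supp\chi_{2}\cup\supp\chi_{\xi_{\lambda}}\subset\{\xi^{2}>5/2\}$, which is possible since $1,2,\xi_{\lambda}$ are distinct and $\xi_{\lambda}>2$ for $\lambda$ large. Then $\sigma=-\chi_{1}\le 0$ on $\supp\chi_{1}$, $\sigma\ge 0$ on $\supp\chi_{2}\cup\supp\chi_{\xi_{\lambda}}$, and $\sigma\equiv 0$ off all three supports, so $(4\xi^{4}-10\xi^{2})\sigma(\xi)\ge 0$ everywhere. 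Thus $H_{p}G\ge 0$ on $p^{-1}(0)\setminus K$.

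It remains to upgrade this to $H_{p}G\gtrsim 1$. On $p^{-1}(0)\setminus K$ the energy relation becomes $(\xi^{2}-1)(\xi^{2}-4)+\eta^{4}+\eta^{2}=\lambda(1-e^{-y^{2}})$, and $H_{p}G$ depends only on $(\xi,\eta,y)$. I would split according to the size of $\vert y\vert$. In the region $\vert y\vert\le R'$ the relevant slice $\{\vert\xi\vert,\vert\eta\vert\le C_{\lambda},\ \vert y\vert\le R',\ (\xi^{2}-1)(\xi^{2}-4)+\eta^{4}+\eta^{2}=\lambda(1-e^{-y^{2}})\}$ is compact and $H_{p}G>0$ on it: if $y\ne 0$ then $2\lambda y^{2}e^{-y^{2}}>0$, while if $y=0$ the relation forces $\eta=0$ and $\xi\in\{\pm 1,\pm 2\}$, where $\sigma(\pm 1)=-1$, $\sigma(\pm 2)=1$ and $(4\xi^{4}-10\xi^{2})\sigma(\xi)\in\{6,24\}$. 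For $\vert y\vert\ge R'$ with $R'$ large, $\lambda(1-e^{-y^{2}})$ is close to $\lambda$, hence so is $(\xi^{2}-1)(\xi^{2}-4)+\eta^{4}+\eta^{2}$; arguing by contradiction and compactness, a sequence on which $H_{p}G\to 0$ would have a limit point $(\xi_{\infty},\eta_{\infty})$ with $\eta_{\infty}=0$, $(4\xi_{\infty}^{4}-10\xi_{\infty}^{2})\sigma(\xi_{\infty})=0$ and $(\xi_{\infty}^{2}-1)(\xi_{\infty}^{2}-4)=\lambda$, forcing $\xi_{\infty}=\pm\xi_{\lambda}$; but there $\sigma(\xi_{\infty})=1$ and $(4\xi_{\infty}^{4}-10\xi_{\infty}^{2})\sigma(\xi_{\infty})=2\xi_{\lambda}^{2}\sqrt{9+4\lambda}>0$, a contradiction. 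Combining the two regimes gives $H_{p}G\ge c(\lambda)>0$ on $p^{-1}(0)\setminus K$, which is the claim.

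The main obstacle is precisely this last step. In the far region $\vert y\vert\to\infty$ the only term carrying $y$, namely $2\lambda y^{2}e^{-y^{2}}$, decays, so the positivity of $H_{p}G$ cannot be read off a single summand: it must be extracted from the constraint imposed by $p^{-1}(0)$, namely that running off to $\vert y\vert=\infty$ pins the momentum $\xi$ near $\pm\xi_{\lambda}$ (where $\sigma=1$) unless $\eta$ is large. The second delicate point, already baked into the definition of $G$, is getting the sign of $(4\xi^{4}-10\xi^{2})\sigma(\xi)$ right near all three pairs of roots $\pm 1,\pm 2,\pm\xi_{\lambda}$ — this is exactly why $\sigma$ must be negative near $\pm 1$ (the direction of the Hamiltonian flow at infinity reverses between $\xi\approx\pm 1$ and $\xi\approx\pm 2$), and it is what fixes the admissible supports of $\chi_{1},\chi_{2},\chi_{\xi_{\lambda}}$.
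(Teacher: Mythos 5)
Your argument follows essentially the same route as the paper: replace $p$ by $p_{0}=(\xi^{2}-1)(\xi^{2}-4)+\eta^{4}+\eta^{2}+\lambda\varphi(y)$ outside a compact set, compute $H_{p_{0}}G=(4\xi^{4}-10\xi^{2})\sigma(\xi)+4\eta^{4}+2\eta^{2}+2\lambda y^{2}e^{-y^{2}}$, and use the energy constraint together with the values $4\xi^{4}-10\xi^{2}=-6,\ 24,\ 9+4\lambda+5\sqrt{9+4\lambda}$ at $\xi=\pm1,\pm2,\pm\xi_{\lambda}$ to conclude positivity. The only real deviation is that you establish the lower bound by a compactness/sequential-contradiction argument split at a threshold $R'$ in $|y|$, whereas the paper runs an explicit $\delta$-dichotomy ($\eta^{2}\lessgtr\delta$, $y^{2}$ in or out of $[\delta,\delta^{-1}]$) yielding a quantitative $\min(\nu,\varepsilon)$; both are valid and essentially equivalent in content. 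One small slip in the write-up: on the constraint set at $y=0$ the relation $(\xi^{2}-1)(\xi^{2}-4)+\eta^{4}+\eta^{2}=0$ does \emph{not} force $\eta=0$ (e.g.\ $\xi^{2}=5/2$ admits $\eta\neq0$); what you should say is that vanishing of $H_{p_{0}}G$ forces $y=0$ \emph{and} $\eta=0$ separately, and only then does the constraint pin $\xi\in\{\pm1,\pm2\}$. This does not affect the conclusion, since the $\eta$-term already gives positivity when $\eta\neq0$.
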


\begin{proof}
Since we estimate $H_{p} G$ in $p^{- 1} ( 0 )$ outside a compact set $K \subset \R^{4}$, we work with
\begin{equation*}
p_{0} ( x , y , \xi , \eta ) = ( \xi^{2} - 1 ) ( \xi^{2} - 4 ) + \eta^{4} + \eta^{2} + \lambda \varphi ( y ) ,
\end{equation*}
instead of $p$. A direct computation gives
\begin{align}
H_{p_{0}} G &= \partial_{\xi} p_{0} \partial_{x} G - \partial_{x} p_{0} \partial_{\xi} G + \partial_{\eta} p_{0} \partial_{y} G - \partial_{y} p_{0} \partial_{\eta} G   \nonumber \\
&= ( 4 \xi^{4} - 10 \xi^{2} ) \big( \chi_{\xi_{\lambda}} ( \xi ) + \chi_{2} ( \xi ) - \chi_{1} ( \xi ) \big) + ( 4 \eta^{4} + 2 \eta^{2} ) + 2 \lambda y^{2} e^{- y^{2}} .  \label{a22}
\end{align}
For $\xi = \pm 1$, $\xi =  \pm 2$ and $\xi = \pm \xi_{\lambda}$, the function $4 \xi^{4} - 10 \xi^{2}$ is equal to $- 6$, $24$ and $9 + 4 \lambda + 5 \sqrt{9 + 4 \lambda}$ respectively. Thus, if the support of $\chi_{\zeta}$ has been chosen sufficiently close to $\{  - \zeta , \zeta\}$, the function $( 4 \xi^{4} - 10 \xi^{2} ) ( \chi_{\xi_{\lambda}}( \xi ) + \chi_{2} ( \xi ) - \chi_{1} ( \xi ) ) $ is non negative for $\xi \in \R$ and there exists $\nu > 0$ such that
\begin{equation} \label{a23}
( 4 \xi^{4} - 10 \xi^{2} ) \big( \chi_{\xi_{\lambda}}( \xi ) + \chi_{2} ( \xi ) - \chi_{1} ( \xi ) \big) \geq \nu ,
\end{equation}
if $\dist ( \xi , \{ \pm 1 , \pm 2 , \pm \xi_{\lambda} \} ) \leq \nu$.

For all $\delta > 0$, there exists $\varepsilon > 0$ such that
\begin{equation} \label{a24}
 ( 4 \eta^{4} + 2 \eta^{2} ) + 2 \lambda y^{2} e^{- y^{2}} \geq \varepsilon ,
\end{equation}
if $\eta^{2} \geq \delta$ or $y^{2} \in [ \delta , \delta^{- 1} ]$. On the other hand, if $\eta^{2} < \delta$ and $y^{2} < \delta$, we have $\vert \eta^{4} + \eta^{2} + \lambda \varphi ( y ) \vert \lesssim \delta$. Since $p_{0} ( x, y , \xi , \eta ) = 0$, this yields $\vert ( \xi^{2} - 1 ) ( \xi^{2} - 4 ) \vert \lesssim \delta$ and then $\dist ( \xi , \{ \pm 1 , \pm 2 \} ) \lesssim \delta$. For $\delta$ small enough, this region is covered by \eqref{a23}. The same way, if $\eta^{2} < \delta$ and $y^{2} > \delta^{- 1}$, we have $\vert \eta^{4} + \eta^{2} + \lambda \varphi ( y ) + \lambda \vert \lesssim \delta$. On $p_{0}^{- 1} ( 0 )$, it implies $\vert ( \xi^{2} - 1 ) ( \xi^{2} - 4 ) - \lambda \vert \lesssim \delta$ and then $\dist ( \xi , \{ \pm \xi_{\lambda} \} ) \vert \lesssim \delta$. Again, this region is covered by \eqref{a23} for $\delta$ small enough. Summing up, \eqref{a22}, together with the estimates \eqref{a23} and \eqref{a24}, shows that
\begin{equation*}
H_{p_{0}} G \geq \min ( \nu , \varepsilon ) ,
\end{equation*}
for all $( x , y , \xi , \eta ) \in p_{0}^{- 1} ( 0 )$.
\end{proof}

\Subsection{A matrix-valued Schr\"{o}dinger operator} \label{s5}

\begin{figure}
\begin{center}
\begin{picture}(0,0)%
\includegraphics{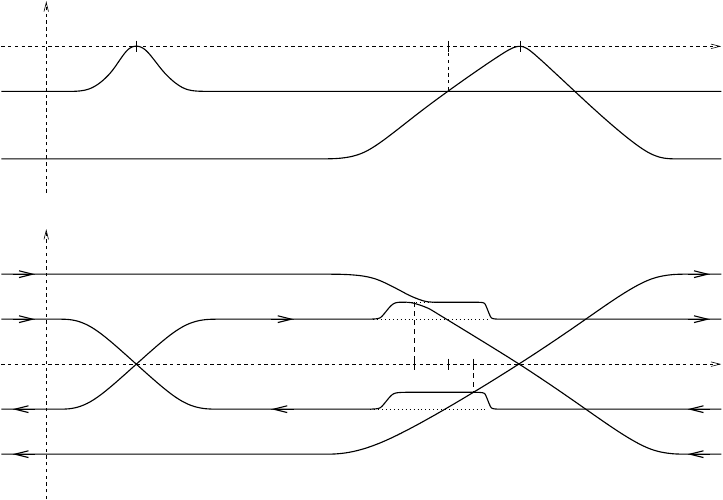}%
\end{picture}%
\setlength{\unitlength}{1184sp}%
\begingroup\makeatletter\ifx\SetFigFont\undefined%
\gdef\SetFigFont#1#2#3#4#5{%
  \reset@font\fontsize{#1}{#2pt}%
  \fontfamily{#3}\fontseries{#4}\fontshape{#5}%
  \selectfont}%
\fi\endgroup%
\begin{picture}(19266,13319)(-2432,-8783)
\put(1276,3689){\makebox(0,0)[b]{\smash{{\SetFigFont{9}{10.8}{\rmdefault}{\mddefault}{\updefault}$x_{1}$}}}}
\put(9601,3689){\makebox(0,0)[b]{\smash{{\SetFigFont{9}{10.8}{\rmdefault}{\mddefault}{\updefault}$x_{0}$}}}}
\put(11551,3689){\makebox(0,0)[b]{\smash{{\SetFigFont{9}{10.8}{\rmdefault}{\mddefault}{\updefault}$x_{2}$}}}}
\put(16426,3689){\makebox(0,0)[lb]{\smash{{\SetFigFont{9}{10.8}{\rmdefault}{\mddefault}{\updefault}$x$}}}}
\put(4576,614){\makebox(0,0)[b]{\smash{{\SetFigFont{9}{10.8}{\rmdefault}{\mddefault}{\updefault}$v_{2} ( x )$}}}}
\put(4576,2414){\makebox(0,0)[b]{\smash{{\SetFigFont{9}{10.8}{\rmdefault}{\mddefault}{\updefault}$v_{1} ( x )$}}}}
\put(-1499,2339){\makebox(0,0)[rb]{\smash{{\SetFigFont{9}{10.8}{\rmdefault}{\mddefault}{\updefault}$-1$}}}}
\put(-1499,539){\makebox(0,0)[rb]{\smash{{\SetFigFont{9}{10.8}{\rmdefault}{\mddefault}{\updefault}$- 4$}}}}
\put(1201,-5911){\makebox(0,0)[b]{\smash{{\SetFigFont{9}{10.8}{\rmdefault}{\mddefault}{\updefault}$\rho_{1}^{0}$}}}}
\put(11401,-5911){\makebox(0,0)[b]{\smash{{\SetFigFont{9}{10.8}{\rmdefault}{\mddefault}{\updefault}$\rho_{2}^{0}$}}}}
\put(-899,-1936){\makebox(0,0)[lb]{\smash{{\SetFigFont{9}{10.8}{\rmdefault}{\mddefault}{\updefault}$\xi$}}}}
\put(16426,-4786){\makebox(0,0)[lb]{\smash{{\SetFigFont{9}{10.8}{\rmdefault}{\mddefault}{\updefault}$x$}}}}
\put(8701,-5686){\makebox(0,0)[b]{\smash{{\SetFigFont{9}{10.8}{\rmdefault}{\mddefault}{\updefault}$x_{+}$}}}}
\put(10276,-4861){\makebox(0,0)[b]{\smash{{\SetFigFont{9}{10.8}{\rmdefault}{\mddefault}{\updefault}$x_{-}$}}}}
\put(9601,-4861){\makebox(0,0)[b]{\smash{{\SetFigFont{9}{10.8}{\rmdefault}{\mddefault}{\updefault}$x_{0}$}}}}
\put(2026,-3811){\makebox(0,0)[lb]{\smash{{\SetFigFont{9}{10.8}{\rmdefault}{\mddefault}{\updefault}$\CH^{0}$}}}}
\end{picture}%
\end{center}
\caption{The potentials $v_{1} , v_{2}$ on top and the energy surface $\det q = 0$.} \label{f2}
\end{figure}

In this part, we provide an example of a selfadjoint $2 \times 2$ matrix-valued Schr\"{o}dinger operator in $\R^{2}$ which satisfies the geometric structure \eqref{a17}. As in the previous section, we begin with a $1$-dimensional symbol. For $( x , \xi ) \in \R^{2}$, we set
\begin{equation*}
q ( x, \xi ) = \begin{pmatrix}
\xi^{2} + v_{1} ( x ) - \delta \chi ( x ) \xi & \varepsilon w ( x )  \\
\varepsilon w ( x ) & \xi^{2} + v_{2} ( x )
\end{pmatrix} ,
\end{equation*}
with $v_{1} , v_{2} , w , \chi \in C^{\infty} ( \R )$ and $0 < \varepsilon , \delta \ll 1$ as follows. The potentials $v_{j}$ are chosen as in Figure \ref{f2}. In particular, $v_{j}$ has a non-degenerate maximum at $x_{j}$ of value $0$, $v_{1} = - 1$ and $v_{2} = - 4$ outside a compact set and the two potentials are equal in $[ x_{1} , x_{2} ]$ only at the point $x_{0}$ in the vicinity of which $v_{1} = - 1$ and $v_{2}^{\prime} > 0$. We now add the small magnetic field $- \delta \chi ( x ) \xi$ to shift in opposite directions the two points in $\det q = 0$ with $x = x_{0}$. We take $\chi \in C^{\infty}_{0} ( \R ; [ 0 , 1 ] )$ supported and equal to $1$ near $x_{0}$. Then, the energy surface $\xi^{2} + v_{1} ( x ) - \delta \chi ( x ) \xi = 0$ near $x_{0}$ is given by
\begin{equation*}
\xi_{\pm}^{1} ( x ) = \frac{\delta \pm \sqrt{4 + \delta^{2}}}{2} = \pm1 + \delta / 2 + \CO ( \delta^{2} ) .
\end{equation*}
On the other hand, $\xi^{2} + v_{2} ( x ) = 0$ is equivalent to $\xi_{\pm}^{2} ( x ) = \pm \sqrt{- v_{2} ( x ) }$. Thus, for $\delta > 0$ small enough, $\xi_{+}^{1}$ and $\xi_{+}^{2}$ meet on $x_{+} < x_{0}$ whereas $\xi_{-}^{1}$ and $\xi_{-}^{2}$ meet on $x_{-} > x_{0}$. Eventually, $w \in C^{\infty}_{0} ( \R ; [ 0 , 1 ] )$ is supported and equal to $1$ near $x_{+}$ (in particular, $w = 0$ near $x_{-}$). Since the off-diagonal terms are equal to $\varepsilon$ near $x_{+}$, the crossing at $x_{+}$ becomes an avoided crossing and disappears.

Summing up, the trajectories in $\det q = 0$ are as in Figure \ref{f2}. The points $\rho_{1}^{0} = ( x_{1} , 0 )$ and $\rho_{2}^{0} = ( x_{2} , 0 )$ are hyperbolic fixed points. Near the 3 crossings (at $x_{-}$ and at the right of $x_{2}$), the matrix $q$ is diagonal. Then, these crossings play no role in the quantum dynamic (there is no possible exchange between the components) and cannot contribute to the trapped set. Thus, the trapped set at energy $0$ is given by
\begin{equation} \label{a30}
K_{q} ( 0 ) = \{ \rho_{1}^{0} , \rho_{2}^{0} \} \cup \CH^{0} ,
\end{equation}
where $\CH^{0}$ is the heteroclinic trajectory between $\rho_{1}^{0}$ and $\rho_{2}^{0}$.

\begin{figure}
\begin{center}
\begin{picture}(0,0)%
\includegraphics{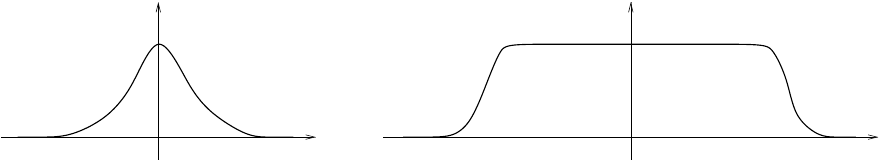}%
\end{picture}%
\setlength{\unitlength}{1184sp}%
\begingroup\makeatletter\ifx\SetFigFont\undefined%
\gdef\SetFigFont#1#2#3#4#5{%
  \reset@font\fontsize{#1}{#2pt}%
  \fontfamily{#3}\fontseries{#4}\fontshape{#5}%
  \selectfont}%
\fi\endgroup%
\begin{picture}(23444,4244)(-4221,-5783)
\put(18976,-4861){\makebox(0,0)[lb]{\smash{{\SetFigFont{9}{10.8}{\rmdefault}{\mddefault}{\updefault}$x$}}}}
\put(17176,-3961){\makebox(0,0)[lb]{\smash{{\SetFigFont{9}{10.8}{\rmdefault}{\mddefault}{\updefault}$\psi ( x )$}}}}
\put(1276,-3961){\makebox(0,0)[lb]{\smash{{\SetFigFont{9}{10.8}{\rmdefault}{\mddefault}{\updefault}$\varphi ( y )$}}}}
\put(3901,-4861){\makebox(0,0)[lb]{\smash{{\SetFigFont{9}{10.8}{\rmdefault}{\mddefault}{\updefault}$y$}}}}
\put(12901,-2536){\makebox(0,0)[lb]{\smash{{\SetFigFont{9}{10.8}{\rmdefault}{\mddefault}{\updefault}$1$}}}}
\put(301,-2536){\makebox(0,0)[lb]{\smash{{\SetFigFont{9}{10.8}{\rmdefault}{\mddefault}{\updefault}$1$}}}}
\end{picture}%
\end{center}
\caption{The functions $\varphi$ and $\psi$.} \label{f3}
\end{figure}

To have a heteroclinic set of higher dimension, we extend the symbol $q$ in $\R^{4}$ following the constructions of \eqref{a25}. This symbol can be written
\begin{equation*}
q ( x, \xi ) = \xi^{2} \Id_{2} + \begin{pmatrix}
- 1 & 0 \\
0 & - 4
\end{pmatrix} + R ( x , \xi ) ,
\end{equation*}
where $R ( x , \xi )$ is a smooth, compactly supported in $x$, affine in $\xi$, real-valued symmetric matrix. We set
\begin{equation}
p ( x , y , \xi , \eta ) = ( \xi^{2} + \eta^{2} ) \Id_{2} + \begin{pmatrix}
- 1 - \lambda & 0 \\
0 & - 4 - \lambda
\end{pmatrix} + R ( x , \xi ) \widetilde{\chi} ( y ) + \lambda \varphi ( y ) \psi ( x ) \Id_{2} ,
\end{equation}
with $\varphi , \psi \in C^{\infty}_{0} ( \R ; [ 0 , 1 ] )$ as in Figure \ref{f3}, $\one_{\{ 0 \}} \prec \widetilde{\chi} \in C^{\infty}_{0} ( \R ; [ 0 , 1 ] )$ and $\lambda > 1$. Moreover, we assume that $R ( \cdot , \xi ) \prec \psi$ for all $\xi \in \R$, $x \psi^{\prime} ( x ) \leq 0$, $\varphi ( y ) = 1 - y^{2}$ near $0$, $x \varphi^{\prime} ( x ) < 0$ for $x$ in the interior of $\supp \varphi \setminus \{ 0 \}$ and $\vert \varphi^{\prime} \vert > 0$ on the support of $\widetilde{\chi}^{\prime}$. In particular, $p$ is the symbol of a selfadjoint $2 \times 2$ matrix-valued Schr\"{o}dinger operator in $\R^{2}$ with smooth coefficients. Moreover,
\begin{equation*}
p ( x , y , \xi , \eta ) = ( \xi^{2} + \eta^{2} ) \Id_{2} + \begin{pmatrix}
- 1 - \lambda & 0 \\
0 & - 4 - \lambda
\end{pmatrix} ,
\end{equation*}
outside a compact set. On the other hand, for $y$ near $0$, we have
\begin{equation*}
p ( x , y , \xi , \eta ) = ( \xi^{2} + \eta^{2} ) \Id_{2} + \begin{pmatrix}
- 1 & 0 \\
0 & - 4
\end{pmatrix} + \lambda ( \psi ( x ) - 1 ) \Id_{2} + R ( x , \xi ) - \lambda y^{2} \psi ( x ) \Id_{2} ,
\end{equation*}
proving that $\rho_{1} = ( x_{1} , 0 , 0 , 0 )$ and $\rho_{2} = ( x_{2} , 0 , 0 , 0 )$ are hyperbolic fixed points and that the trapped set of $p$ at energy $0$ in $\{ ( y , \eta ) = ( 0 , 0 ) \}$ are those of $q$.

It remains to show that the trapped set of $p$ at energy $0$ satisfies
\begin{equation} \label{a29}
K_{p} ( 0 ) = \{ \rho_{1} , \rho_{2} \} \cup \CH ,
\end{equation}
where $\CH = ] \rho_{1} , \rho_{2} [$. The notion of trajectory for matrix-valued symbols can be subtle to defined in presence of eigenvalues crossings. To get around this difficulty, we use the approach of Jecko \cite{Je03_01} and construct a (scalar matrix-valued) escape function for $p$ outside $\{ y = 0 \}$. Since $p$ is elliptic, we have $\vert\xi \vert + \vert \eta \vert \leq C \< \lambda \>^{1 / 2}$ on the energy surface $\det p = 0$. Combining with the properties of the cut-off functions, it gives
\begin{align}
\{ p , y \eta \Id_{2} \} &= 2 \eta^{2} \Id_{2} - R ( x , \xi ) y \widetilde{\chi}^{\prime} ( y ) - \lambda y \varphi^{\prime} ( y ) \psi ( x ) \Id_{2}  \nonumber \\
&\geq \big( 2 \eta^{2} + 2 \vert y \varphi^{\prime} ( y ) \vert \psi ( x ) \big) \Id_{2} ,  \label{a26}
\end{align}
as $2 \times 2$ matrices for $\lambda$ fixed large enough in $\det p = 0$. Consider $g_{x}, g _{y} \in C^{\infty}_{0} ( \R ; [ 0 , 1 ] )$ such that $R ( \cdot , \xi ) \prec g_{x} \prec \psi$ for all $\xi \in \R$, $\widetilde{\chi} \prec g_{y} \prec \one_{\supp \varphi}$ and $x g_{x}^{\prime} ( x ) \leq 0$. Then, direct computations yield
\begin{align}
\{ p , x \xi ( 1 - g_{x} ( x ) g_{y} ( y ) ) \Id_{2} \} ={}& ( 2 \xi \Id_{2} + \partial_{\xi} R \widetilde{\chi} ( y ) ) \xi ( 1 - g_{x} g_{y} - x g_{x}^{\prime} g_{y} ) - 2 \eta x \xi g_{x} g_{y}^{\prime} \Id_{2}      \nonumber   \\
&- ( \partial_{x} R ( x , \xi ) \widetilde{\chi} ( y ) + \lambda \varphi ( y ) \psi^{\prime} ( x ) \Id_{2} ) x ( 1 - g_{x} ( x ) g_{y} ( y ) )   \nonumber   \\
={}& 2 \xi^{2}( 1 - g_{x} g_{y} - x g_{x}^{\prime} g_{y} ) \Id_{2} - 2 \eta x \xi g_{x} g_{y}^{\prime} \Id_{2}      \nonumber   \\
&- \lambda \varphi ( y ) x \psi^{\prime} ( x ) ( 1 - g_{x} ( x ) g_{y} ( y ) ) \Id_{2}   \nonumber   \\
\geq{}& 2 \xi^{2}( 1 - g_{x} g_{y} ) \Id_{2} - 2 \eta x \xi g_{x} g_{y}^{\prime} \Id_{2}   \nonumber   \\
\geq{}& 2 \xi^{2} ( 1 - g_{x} g _{y} ) \Id_{2} - C \vert y \varphi^{\prime} ( y ) \vert \psi ( x ) \Id_{2} ,   \label{a27}
\end{align}
for some $C > 0$. Here, we use the support properties of the cut-off functions, the bound on $\vert\xi \vert + \vert \eta \vert$, $x g_{x}^{\prime} ( x ) \leq 0$ and $x \psi^{\prime} ( x ) \leq 0$. Combining \eqref{a26} and \eqref{a27}, we get
\begin{equation} \label{a28}
\big\{ p , \big( y \eta + \delta x \xi ( 1 - g_{x} ( x ) g_{y} ( y ) ) \big) \Id_{2} \big\} \geq \big( 2 \eta^{2} + 2 \delta \xi^{2} ( 1 - g_{x} g _{y} ) + \vert y \varphi^{\prime} ( y ) \vert \psi ( x ) \big) \Id_{2} ,
\end{equation}
for $\delta > 0$ small enough. In $\det p = 0$, the right hand side of the previous equation is positive outside $\{ y = \eta = 0 \}$. Moreover, the trapped set of $p$ in $\{ y = \eta = 0 \}$ at energy $0$ is given by those of $q$. Thus, \eqref{a29} follows from \eqref{a30} and \eqref{a28}.

Summing up, the matrix-valued symbol $p$ satisfies \eqref{a17} where $\CH$ consists of a single curve. To have a heteroclinic set $\CH$ of higher dimension, one can modify slightly $p$ near $( \rho_{1} + \rho_{2} ) / 2$ as in Section \ref{s3} or Section \ref{s2}. Then, the operator $\Op ( p )$ is a selfadjoint matrix-valued Schr\"{o}dinger operator satisfying the geometric assumption of \eqref{a17}.

\bibliographystyle{amsplain}

\providecommand{\MRhref}[2]{%
  \href{http://www.ams.org/mathscinet-getitem?mr=#1}{#2}
}
\providecommand{\href}[2]{#2}

\end{document}